\documentclass{amsart}
\usepackage{amsmath}
\usepackage{amssymb}
\usepackage{color}
\usepackage{mathdots}

\usepackage{graphicx}
\usepackage{epsfig}
\usepackage{multirow}
\usepackage{colortbl}

\definecolor{lightgray}{gray}{0.9}

\DeclareMathOperator{\Id}{Id}

\def\FF{\mathbb{F}}

\def\KK{\mathbb{K}}

\newtheorem{theorem}{Theorem}[section]

\newtheorem{proposition}[theorem]{Proposition}
\newtheorem{corollary}[theorem]{Corollary}
\theoremstyle{definition}
\newtheorem{definition}[theorem]{Definition}

\theoremstyle{remark}

\numberwithin{equation}{section}

\def\separa{\hbox to 14 truecm{\hrulefill}}

\date\today

\author[P. Danchev]{Peter Danchev}
\address{Institute of Mathematics and Informatics, Bulgarian Academy of Sciences, 1113 Sofia, Bulgaria}
\email{danchev@math.bas.bg}
\thanks{The first author was partially supported  by the BIDEB 2221 of T\"UB\'ITAK}

\author[E. Garc\'\i a]{Esther Garc\'\i a}
\address{Departamento de Matem\'{a}tica  Aplicada, Ciencia e Ingenier\'{\i}a de  Materiales y Tecnolog\'{\i}a Electr\'onica,
Universidad Rey Juan Carlos, 28933 M\'{o}s\-to\-les (Madrid), Spain}
\thanks{The second author and third authors were partially supported by B42025-003: Ayudas para Proyectos Puente de la UMA}
\email{esther.garcia@urjc.es}

\author[M. G\'omez Lozano]{Miguel G\'omez Lozano}
\address{Departamento de \'Algebra, Geometr\'{\i}a y
Topolog\'{\i}a, Universidad de M\'alaga, 29071 M\'alaga, Spain}
\thanks{The three authors were partially supported by the Junta de Andaluc\'{\i}a PPRO-FQM264-G-2023 (FQM264-G-FEDER)}
\email{miggl@uma.es}

\begin{document}

\title[Matrices over finite fields of characteristic 2 ...]{Matrices over finite fields of characteristic 2 \\ as sums of diagonalizable and square-zero matrices}

\maketitle

\begin{abstract} We investigate the problem asking when any square matrix whose entries lie in a finite field of characteristic 2 is decomposable into the sum of a diagonalizable matrix and a nilpotent matrix with index of nilpotency at most 2 and, as a result, we completely resolve this question in the affirmative for any finite field of characteristic 2 having strictly more than three elements. Our main theorem of that type, combined with results from our recent publication in Linear Algebra \& Appl. (2026) (see \cite{DGL4}), totally settle this problem for all finite fields different from  $\FF_2$ and $\FF_3$. However, in this paper we also prove that each matrix over $\FF_2$ is expressible as the sum of a potent matrix with index of potency not exceeding 4 and a nilpotent matrix with index of nilpotency not exceeding 2, thus substantiating recent examples due to \v{S}ter in Linear Algebra \& Appl. (2018) and Shitov in Indag. Math. (2019) (see, respectively, \cite{St1} and \cite{Sh}).
\end{abstract}


\bigskip

{\footnotesize \textit{Key words}: characteristic polynomial, diagonalizable matrix, square-zero matrix, decomposition.}

{\footnotesize \textit{2020 Mathematics Subject Classification}: 15A15, 15A21, 15A83.}


\section{Fundamentals and Motivation}

All matrices in the current paper are {\it square} having the same number of columns and rows, and we will refer to this number as its order. Such a matrix is termed {\it non-derogative} (or, in other terms, {\it non-derogatory}) if its minimal polynomial is identical with its characteristic polynomial; in such case it is well known that under an appropriate change of basis it has the form of the companion matrix of its characteristic polynomial. Also, a matrix $M$ is called {\it diagonalizable} (or, in other words, {\it non-defective}) if it is similar to a diagonal matrix, that is, there is an invertible matrix $U$ such that the matrix $U^{-1}MU$ is diagonal. On the other hand, a {\it nilpotent} matrix $N$ is the one for which $N^k = 0$ for some integer $k>0$. In the case $k=2$, such a matrix is just said to be {\it square-zero} for short. Contrastingly, a {\it $t$-potent} matrix $Z$ is the one for which $Z^t=Z$ for some integer $t\geq 2$. In the case when $t=2$, such a matrix is just said to be {\it idempotent}.

\medskip

The problem of decomposing a given matrix over an arbitrary field into the sum of an idempotent matrix and a nilpotent matrix originated from \cite{BCDM} by establishing that such a decomposition occurs only when the field is finite and has exactly two elements, say $\mathbb{F}_2$. This basic result was considerably strengthened in a series of further works of this sort like these obtained in \cite{St1}, \cite{St2} and \cite{Sh}, respectively.

\medskip

Besides, in 2018, Breaz explored in \cite{Br} those matrices over finite fields that are a sum of a periodic matrix and a nilpotent matrix by demonstrating that every matrix over a finite field of odd cardinality $q$ is a sum of a $q$-potent matrix and a nilpotent matrix of order less than or equal to $3$ (see \cite{BM} too). Moreover, he also asked in \cite[Remark 7]{Br} whether for matrices of a sufficiently large order over a finite field of odd power $q$ there will exist a more precise decomposition of the form of a $q$-potent matrix plus a square-zero matrix. This difficult question was fully resolved recently in \cite{DGL4} by proving that for some finite fields of cardinality 3 and order multiple of 3 the answer is unfortunately {\it negative}, whereas for fields of cardinality $q$ at least 5 the answer is absolutely {\it positive} (see, for a more detailed information, \cite{DGL1}, \cite{DGL2} and \cite{DGL3} as well).

\medskip

Recall that, in \cite{St1}, ~\v{S}ter showed that the following two $4\times 4$ matrices

\medskip

$$
A=\left(
    \begin{array}{cccc}
      0 & 0 & 0 & 1 \\
      1 & 0 & 0 & 0 \\
      0 & 1 & 0 & 0 \\
      0 & 0 & 1 & 1 \\
    \end{array}
  \right),~ B=\left(
    \begin{array}{cccc}
      0 & 0 & 0 & 1 \\
      1 & 0 & 0 & 1 \\
      0 &1 & 0 &1 \\
      0 & 0 &1 & 1 \\
    \end{array}
  \right)
$$

\medskip

\noindent{\it cannot} be expressed as the sum of an idempotent matrix and a nilpotent matrix $N$ with $N^3=0$ (notice that matrices over $\mathbb{F}_2$ are idempotent if, and only if, they are diagonalizable).
In this vein, Breaz additionally asked in \cite{Br} of whether or not for big enough positive integers $n$ all matrices of order $n$ over a field of even cardinality $q$ admit decompositions of the form $E+N$ with $E^q=E$ and $N^3=0$. Subsequently, Shitov answered in \cite{Sh} this Breaz's question by showing that odd direct sums of matrices of the form $A$ cannot be expressed as the sum of an idempotent matrix and a nilpotent matrix of index lower than 4 using an ingenious trick concerning minimal polynomials and ranks of matrices.

\medskip

Nevertheless, this challenging problem remained still open for matrices over fields of even characteristic and at least four elements. So, this is actually the main motivation of the present paper and, especially, the aim is twofold: on the one hand, to complete in all generality our work begun in \cite{DGL1} by showing that every matrix over a field of characteristic 2 and at least four elements can always be expressed as the sum of a diagonalizable matrix and a square-zero matrix, thus finishing all the groundwork;  on the other hand, to answer Breaz's question positively by establishing that every matrix over a field of even cardinality $q$ with $q\ge 4$ can always be expressed as $E+N$ such that $E^q=E$ and $N^2=0$. To achieve these two goals, our structural results are stated and proved in Theorem~\ref{main} and Proposition~\ref{adjusting}, respectively. We also succeeded in Corollary \ref{F2-potent} to express every matrix over $\FF_2$ as the sum of a potent matrix having order of potency less than or equal to 4 and a square-zero matrix, thus paralleling the aforementioned examples constructed by \v{S}ter \cite{St1} and Shitov \cite{Sh}.

\section{Non-derogative matrices over a field $\FF$ of characteristic 2, $\#\FF\ge4$}

The principal target of this section is to show that every non-derogative matrix can be expressed as the sum of a square-zero matrix and a diagonalizable matrix. Before doing that, let us first list some non-derogative diagonalizable matrices of small orders that will appear in our arguments presented below.

\medskip

We use hereafter the notation $\#\FF$ to denote the cardinality of the (finite) field $\FF$. Moreover, along the text without explicit further mention, all matrices of order $n$ over $\FF$ will be identified with endomorphisms of $\FF^n$, and we will denote by $\{e_1,\dots, e_n\}$ the {\it canonical basis} of $\FF^n$.

\medskip

\begin{definition}\label{notation} Let $\mathbb{F}$ be a field of even characteristic with $\#\mathbb{F}\ge 4$, let $a\in \mathbb{F}$ with $a\ne 0,1$, and let us consider the following matrices of order 2, 3 and 4 over $\FF$:

\medskip

\noindent $\bullet$ $n=2$: for any $0\ne u_1\in \FF$, we set

$$D_{2}:=\left(
    \begin{array}{cc}
      0 & 0 \\
      1 & u_1 \\
    \end{array}
  \right)$$
with associated characteristic polynomial given by $x(x+u_1)$ and basis of eigenvectors $$\mathcal{B}=\{u_1 e_1+e_2, e_2\}.$$

\medskip

\noindent $\bullet$ $n=3$: for any $u_2\in \FF$ and any $a\in \FF$ with $a\ne u_2, u_2+1$, we set
$$
D_{3}(u_2):=
\left(
  \begin{array}{ccc}
     0 & 0 & (a^2+a)(u_2+1) \\
     1 & 0 & u_2+a^2+a+1 \\
     0 & 1 & u_2 \\
  \end{array}
\right)
         ,$$
where $D_3(u_2)$ is diagonalizable with eigenvalues $u_2+1, a, a+1$ and basis of eigenvectors
$$\mathcal{B}=\{ (a^2+a)e_1+e_2+e_3, ((a+1)u_2+a+1)e_1+(u_2+a)e_2+e_3, (au_2+a)e_1+(a+u_2+1)e_2+e_3
\}.$$
In the case when $u_2=0$, we will denote $D_3:=D_3(0)$, i.e.,
$$D_{3}:=
\left(
  \begin{array}{ccc}
     0 & 0 & a^2+a \\
     1 & 0 & a^2+a+1 \\
     0 & 1 & 0 \\
  \end{array}
\right)
$$
with associated characteristic polynomial given by $(x+1)(x+a)(x+a+1)$ and basis of eigenvectors
$$
\mathcal{B}=\{(a^2+a)e_1+e_2+e_3, (a+1)e_1+ae_2+e_3, ae_1+(a+1)e_2+e_3\}.
$$
We will also use in the sequel the square-zero matrix
$$
N_3:=\left(
           \begin{array}{ccc}
             0 & 0 & a^2+a \\
             0 & 0 & a^2+a+1 \\
             0 & 0 & 0 \\
           \end{array}
         \right).
$$

\medskip

\noindent $\bullet$ $n=4$: for any $a\in \FF$ with $a\ne 0,1$, we set

$$D_{4}:=\left(
           \begin{array}{cccc}
             0 & 0 & 0 & 0 \\
             1 & 0 & 0 & a^2+a \\
             0 & 1 & 0 & a^2+a+1 \\
             0 & 0 & 1 & 0 \\
           \end{array}
         \right),\qquad N_{4}:=\left(
           \begin{array}{cccc}
             0 & 0 & 0 & 0 \\
             0 & 0 & 0 & a^2+a \\
             0 & 0 & 0 & a^2+a+1 \\
             0 & 0 & 0 & 0 \\
           \end{array}
         \right),$$
where $D_4$ is diagonalizable with associated characteristic polynomial given by $x(x+1)(x+a)(x+a+1)$ and basis of eigenvectors
$$
\mathcal{B}=\{(a^2+a)e_1+(a^2+a+1)e_2+e_4, (a^2+a)e_2+e_3+e_4, (a+1)e_2+ae_3+e_4, ae_2+(a+1)e_3+e_4\}.
$$

\medskip

\noindent $\bullet$ Moreover, for every $r,s\in \mathbb{N}$, let us define
$$Q_{r,s}:=\left(
            \begin{array}{cccc}
              0 & \cdots & 0 & 1 \\
              0 & \cdots  & 0 & 0 \\
              \vdots & \ddots  & \vdots & \vdots \\
              0 &  \cdots & 0 & 0 \\
            \end{array}
          \right)\in M_{r,s}(\mathbb{F}).$$
\end{definition}

A useful technical claim asserts the following.

\begin{proposition}\label{case2}
Let $\FF$ be a field of characteristic 2, and let $B$ be a non-derogative matrix over $\FF$ of order 2 and trace $u_1$. Then, $B$ can be expressed as the sum of a square-zero matrix and a diagonalizable matrix $D$, where if $u_1=0$, the eigenvalues of $D$ are $v$ (double) for some $v$ with $v^2=u_0$, and if $u_1\ne 0$, the eigenvalues of $D$ are $0, u_1$. In particular, if $\FF=\FF_2$, then $B$ can be expressed as the sum of a square-zero matrix and a potent matrix $D$ such that $D^2=D$.
\end{proposition}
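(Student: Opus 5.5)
Since $B$ is non-derogative of order 2, it is similar to the companion matrix of its characteristic polynomial $x^2+u_1x+u_0$, namely
$$C=\left(\begin{array}{cc} 0 & u_0\\ 1 & u_1\end{array}\right),$$
where $u_0=\det B$ (recall that in characteristic 2 signs can be ignored). Both square-zero and diagonalizable matrices are preserved under conjugation, and conjugation keeps eigenvalues. So it suffices to decompose $C$ and then conjugate back.

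\emph{Case $u_1\neq 0$.} Write
$$C=D_2+\left(\begin{array}{cc} 0 & u_0\\ 0 & 0\end{array}\right),$$
with $D_2$ as in Definition~\ref{notation}. The second summand is strictly upper triangular, hence square-zero. $D_2$ has characteristic polynomial $x(x+u_1)$ with two distinct roots $0\neq u_1$, so it is diagonalizable with eigenvalues $0,u_1$. One can also use the eigenbasis listed in Definition~\ref{notation}; note that this fails when $u_1=0$, which is why that case needs separate treatment.

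\emph{Case $u_1=0$.} Here the claim involves a $v$ with $v^2=u_0$. Over a perfect field of characteristic 2, e.g.\ any finite field, $v=u_0^{2^{m-1}}$ exists when $\#\FF=2^m$, since squaring is bijective. In general the statement implicitly assumes such a $v$ exists, or this case is read with $v$ in that setting. Take $D=vI$, which is trivially diagonalizable with double eigenvalue $v$, and set $N=C+vI$. By Cayley--Hamilton,
$$N^2=C^2+v^2I=C^2+u_0I=u_1C=0,$$
using characteristic 2 so that the cross term $2vC$ vanishes. Thus $N$ is square-zero. Finding $v$ is the only delicate point.

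\emph{The case $\FF=\FF_2$.} If $u_1=1$, then $D$ has eigenvalues $0,1$ and is diagonalizable, so $D^2=D$. If $u_1=0$, then $v^2=u_0$ gives $v=u_0\in\{0,1\}$, so $D=vI$ satisfies $D^2=D$. In both cases $D$ is idempotent, i.e.\ potent with $D^2=D$.
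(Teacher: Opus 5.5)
Your proof is correct and follows the paper's argument. You use the same reduction to the companion matrix and the same splitting of $C$ into $D_2$ plus the strictly upper triangular matrix with $(1,2)$-entry $u_0$ when $u_1\neq 0$. When $u_1=0$ you make the same choice $D=vI$, $N=C+vI$, checking $N^2=0$ via Cayley--Hamilton where the paper multiplies directly.

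Your caveat about $v$ is justified and applies to the paper too. Its ``injective hence surjective'' argument for squaring needs $\FF$ finite or perfect. In fact, when $u_1=0$, any decomposition forces $\text{Trace}(D)=0$, hence $D=\lambda I$ with $\lambda^2=u_0$. So over a non-perfect field with $u_0$ a non-square, no decomposition exists at all.
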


\begin{proof}
Without loss of generality, we can suppose that $B$ is of the form
 $$B=\left(
  \begin{array}{cccc}
    0 & u_0 \\
    1 & u_1  \\
  \end{array}
\right).$$

If $u_1=0$, then since the map $\phi:\FF\to \FF$ defined by $\phi(x)=x^2$ is a one-to-one $\FF_2$-linear map, it is surjective and, therefore, for each $u_0\in \FF$, there exists $v\in \FF$ with $v^2=u_0$. Hence,
$$B=\left(
     \begin{array}{cc}
       0 & v^2 \\
       1 & 0 \\
     \end{array}
   \right)=\underbrace{\left(
     \begin{array}{cc}
       v & v^2 \\
       1 & v \\
     \end{array}
   \right)}_N+\underbrace{\left(
     \begin{array}{cc}
       v & 0 \\
       0 & v \\
     \end{array}
   \right)}_D$$
with $N^2=0$ and $D$ diagonal. In particular, if $\FF=\FF_2$, then $D$ manifestly satisfies $D^2=D$.

If, however, $u_1\ne 0$, then we can decompose $B$ into $N+D$, where $N$ is a square-zero rank one matrix and $D$ is diagonalizable such that
$$B=\left(
     \begin{array}{cc}
       0 & u_0 \\
       1 & u_1 \\
     \end{array}
   \right)=\underbrace{\left(
     \begin{array}{cc}
       0 & u_0 \\
       0 & 0 \\
     \end{array}
   \right)}_N+\underbrace{\left(
     \begin{array}{cc}
       0 & 0 \\
       1 & u_1 \\
     \end{array}
   \right)}_D,$$
as suspected. Particularly, if $\FF=\FF_2$, then $u_1=1$ and so
$D=\left(
                                                                                    \begin{array}{cc}
                                                                                      0 & 0 \\
                                                                                      1 & 1 \\
                                                                                    \end{array}                                                                                  \right)$
obviously satisfies $D^2=D$.
\end{proof}

Our next helpful technicality claims the following.

\begin{proposition}\label{case3}
Let $\FF$ be a field of characteristic 2 with $\#\FF\ge4$, and let $B$ be a non-derogative matrix over $\FF$ of order 3 and trace $u_2$. Then, for any $a\in \mathbb{F}$, $a\ne u_2,u_2+1$, $B$ can be expressed as the sum of a square-zero matrix and a diagonalizable matrix with eigenvalues $a,a+1,u_2+1$.
\end{proposition}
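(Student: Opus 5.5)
Since $B$ is non-derogative, it is similar to the companion matrix of its characteristic polynomial. Being a sum of a square-zero matrix and a diagonalizable matrix with prescribed eigenvalues is invariant under similarity. So we may assume
$$B=\left(\begin{array}{ccc} 0&0&u_0\\ 1&0&u_1\\ 0&1&u_2\end{array}\right)$$
with $u_2$ the trace. The natural candidate for the diagonalizable summand is the matrix $D_3(u_2)$ of Definition~\ref{notation}. It agrees with $B$ in the first two columns and in the $(3,3)$ entry. Hence
$$N:=B+D_3(u_2)=\left(\begin{array}{ccc} 0&0&u_0+(a^2+a)(u_2+1)\\ 0&0&u_1+u_2+a^2+a+1\\ 0&0&0\end{array}\right),$$
computed in characteristic 2. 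The only nonzero column of $N$ is the third, and its third entry vanishes. Therefore $N e_3\in\langle e_1,e_2\rangle$ and $N e_1=Ne_2=0$, so $N^2=0$. This gives $B=N+D_3(u_2)$.

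It remains to check that $D_3(u_2)$ is diagonalizable with eigenvalues $u_2+1,a,a+1$. The definition lists these as the eigenvalues, but I would verify it directly. The characteristic polynomial of the companion matrix $D_3(u_2)$ is
$$x^3+u_2x^2+(u_2+a^2+a+1)x+(a^2+a)(u_2+1).$$
Expanding $(x+u_2+1)(x+a)(x+a+1)=(x+u_2+1)(x^2+x+a^2+a)$ gives $x^3+u_2x^2+(a^2+a+u_2+1)x+(a^2+a)(u_2+1)$, which matches.

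Next I need the three roots to be pairwise distinct. Clearly $a\ne a+1$. The hypothesis $a\ne u_2+1$ handles $a$ versus $u_2+1$. The hypothesis $a\ne u_2$, that is $a+1\ne u_2+1$, handles $a+1$ versus $u_2+1$. A $3\times3$ matrix with three distinct eigenvalues in $\FF$ is diagonalizable; alternatively one can check the eigenvectors listed in Definition~\ref{notation}.

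The hypothesis $\#\FF\ge4$ is used only to guarantee that some admissible $a$ exists. It excludes at most two values of $\FF$, and in $\FF_2$ both values are excluded. The statement itself fixes an admissible $a$, so this point causes no difficulty.

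There is no real obstacle here. The only point needing care is the polynomial identity for the characteristic polynomial, together with the fact that the mismatched entries of $B$ and $D_3(u_2)$ lie in positions $(1,3)$ and $(2,3)$ only, which is what makes $N$ square-zero.
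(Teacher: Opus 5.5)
Your proposal is correct and follows the paper's own proof: you reduce to the companion form and take $D=D_3(u_2)$. Then $N=B+D_3(u_2)$ is a rank-one square-zero matrix supported in positions $(1,3)$ and $(2,3)$, and $D_3(u_2)$ is diagonalizable because its eigenvalues $u_2+1,a,a+1$ are distinct. Your explicit check of the characteristic polynomial and of distinctness fills in what the paper simply cites from Definition~\ref{notation}.
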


\begin{proof}
We can follow the same argument as given in \cite[Lemma 3.1]{DGL1} to decompose $B$ into $N+D$, where $N$ is a square-zero matrix of rank one and $D$ is diagonalizable with 3 different eigenvalues.

In fact, we know that $B$ is similar to a matrix of the form
 $$A=\left(
  \begin{array}{ccc}
     0 & 0 & u_0 \\
     1 & 0 & u_1 \\
     0 & 1 & u_2 \\
  \end{array}
\right).$$
Since $a\ne u_2,u_2+1$, we can consider the three different elements $a,a+1,u_2+1\in \mathbb{F}$. Hence,
$$\underbrace{\left(
  \begin{array}{ccc}
     0 & 0 & u_0 \\
     1 & 0 & u_1 \\
     0 & 1 & u_2 \\
  \end{array}
\right)}_A=\underbrace{\left(
  \begin{array}{ccc}
     0 & 0 & (a^2+a)(u_2+1)+u_0 \\
     0 & 0 & u_2+a^2+a+1+u_1 \\
     0 & 0 & 0 \\
  \end{array}
\right)}_N+\underbrace{\left(
  \begin{array}{ccc}
     0 & 0 & (a^2+a)(u_2+1) \\
     1 & 0 & u_2+a^2+a+1 \\
     0 & 1 & u_2 \\
  \end{array}
\right)}_D,$$
where $N$ is a rank one square-zero matrix and $D$ is diagonalizable with eigenvalues $u_2+1, a, a+1$ -- see Definition~\ref{notation}.
\end{proof}

As a crucial consequence, we yield:

\begin{corollary}\label{case3-corollary}
If $\FF=\FF_2$ and $B$ is a non-derogative matrix over $\FF$ of order 3, then $B$ can be expressed as the sum of a square-zero matrix and a potent matrix $D$ with $D^4=D$.
\end{corollary}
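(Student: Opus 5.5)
The idea is to run the argument of Proposition~\ref{case3} over $\FF_4$ instead of $\FF_2$. Over $\FF_2$ the proposition cannot be applied directly, since it needs $a\ne u_2,u_2+1$ and $\FF_2$ offers only $0$ and $1$. We therefore view $B$ as a matrix over $\FF_4=\{0,1,\omega,\omega^2\}$, where $\omega^2=\omega+1$. The target is a decomposition $B=N+D$ with both $N$ and $D$ having entries in $\FF_2$, $N^2=0$, and $D$ diagonalizable over $\FF_4$ with eigenvalues in $\FF_4$. Any matrix diagonalizable over $\FF_4$ with eigenvalues in $\FF_4$ satisfies $D^4=D$, because every $\lambda\in\FF_4$ satisfies $\lambda^4=\lambda$. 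The relation $D^4=D$ is a polynomial identity in the entries of $D$, so it holds whether we regard $D$ over $\FF_2$ or over $\FF_4$.

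First, put $B$ in companion form $A$ with last column $(u_0,u_1,u_2)^T$ and $u_i\in\FF_2$; the similarity is defined over $\FF_2$. Then choose $a=\omega$. For both values $u_2\in\{0,1\}$ we have $a\ne u_2,u_2+1$. The key observation is that $a^2+a=\omega^2+\omega=1\in\FF_2$. Consequently the matrix
\[
D=D_3(u_2)=\left(\begin{array}{ccc}0&0&u_2+1\\1&0&u_2\\0&1&u_2\end{array}\right)
\]
from Definition~\ref{notation} has all its entries in $\FF_2$. The matrix $N=A+D$ is then also over $\FF_2$ and is supported in its last column, with zero $(3,3)$-entry. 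So $N$ is strictly upper triangular with nonzero entries only in column $3$, which gives $N^2=0$.

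By Proposition~\ref{case3}, $D$ is diagonalizable over $\FF_4$ with the three distinct eigenvalues $\omega,\omega^2,u_2+1$, all in $\FF_4$. Hence $D^4=D$. As a direct check, the characteristic polynomial of $D$ is $(x+u_2+1)(x^2+x+1)$, which divides $x^4+x=x(x+1)(x^2+x+1)$. Since the three eigenvalues are distinct, the minimal polynomial equals this characteristic polynomial, so $D^4=D$ holds over $\FF_2$. Transporting back through the $\FF_2$-similarity gives the decomposition for $B$ itself.

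The only real obstacle is ensuring that the decomposition descends to $\FF_2$. This is handled by the choice $a=\omega$, which makes $a^2+a=1$. Everything else is a routine divisibility check, namely $(x+c)(x^2+x+1)\mid x^4+x$ for $c\in\FF_2$.
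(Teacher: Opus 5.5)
Your proposal is correct and follows the paper's proof essentially verbatim: you pass to $\FF_4$ with $a^2+a+1=0$, observe that $a^2+a=1$ forces both $D_3(u_2)$ and $N=A+D$ to have entries in $\FF_2$, and conclude $D^4=D$ because $D$ is diagonalizable over $\FF_4$. Your extra check that $(x+u_2+1)(x^2+x+1)$ divides $x^4+x$ is a self-contained confirmation the paper does not spell out; for it, Cayley--Hamilton alone suffices, so the appeal to distinct eigenvalues and the minimal polynomial is not needed.
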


\begin{proof}
We know that $B$ is similar to a matrix of the form
$$A=\left(
  \begin{array}{ccc}
     0 & 0 & u_0 \\
     1 & 0 & u_1 \\
     0 & 1 & u_2 \\
  \end{array}
\right)\in M_3(\FF_2)\subseteq M_3(\FF_4).$$
Let $a\in \FF_4$ be the element such that $a^2+a+1=0$; then, the decomposition given in Proposition \ref{case3} for $A$ reads as follows: $A=N+D$, where
$$
N:=\left(
  \begin{array}{ccc}
     0 & 0 & (a^2+a)(u_2+1)+u_0 \\
     0 & 0 & u_2+a^2+a+1+u_1 \\
     0 & 0 & 0 \\
  \end{array}
\right)=\left(
  \begin{array}{ccc}
     0 & 0 & u_2+1+u_0 \\
     0 & 0 & u_2+u_1 \\
     0 & 0 & 0 \\
  \end{array}
\right)\in M(\FF_2)
$$
\noindent and
$$
D:=\left(
  \begin{array}{ccc}
     0 & 0 & (a^2+a)(u_2+1) \\
     1 & 0 & u_2+a^2+a+1 \\
     0 & 1 & u_2 \\
  \end{array}
\right)=\left(
  \begin{array}{ccc}
     0 & 0 & u_2+1 \\
     1 & 0 & u_2 \\
     0 & 1 & u_2 \\
  \end{array}
\right)\in M(\FF_2).
$$
Clearly, $N^2=0$ and $D^4=D$, because it is diagonalizable matrix over $\FF_4$ with eigenvalues $a,a+1,u_2+1$ exploiting Proposition \ref{case3} again.
\end{proof}

We now study non-derogative matrices of order 4.

\begin{proposition}\label{case4}
Let $\FF$ be a field of characteristic 2 with $\#\FF\ge4$, and let $B$ be a non-derogative matrix over $\FF$ of order 4.
\begin{itemize}
\item[(a)] If $\text{Trace}(B)=u_3\ne 0$, then, for any $a\in\FF$ such that $a\ne 0, u_3$, $B$ can be expressed as the sum of a square-zero matrix and a diagonalizable matrix with eigenvalues $0,a,u_3+a$.
\item[(b)] If $\text{Trace}(B)=0$, then, for any  $a\in \FF$ such that $a\ne 0,1$, $B$ can be expressed as the sum of a square-zero matrix and a diagonalizable matrix with eigenvalues $0,1,a,a+1$.
\end{itemize}
\end{proposition}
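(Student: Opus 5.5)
For part (b) we put $B$, which is non-derogative, in companion form
$$A=\left(\begin{array}{cccc} 0&0&0&u_0\\ 1&0&0&u_1\\ 0&1&0&u_2\\ 0&0&1&0\end{array}\right),$$
and write $A=N+D_4$ with $D_4$ as in Definition~\ref{notation}. Then $N=A-D_4$ has nonzero entries only in its last column, namely $(u_0,\ u_1+a^2+a,\ u_2+a^2+a+1,\ 0)^T$. Since the $(4,4)$ entry of $N$ is $0$, we get $Ne_4\in\operatorname{span}\{e_1,e_2,e_3\}\subseteq\ker N$, so $N^2=0$. By Definition~\ref{notation}, $D_4$ is diagonalizable with eigenvalues $0,1,a,a+1$, and these are distinct because $a\neq 0,1$. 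This settles (b) in the same way as Proposition~\ref{case3}.

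For part (a) the wanted $D$ has only three distinct eigenvalues in order $4$. So $D$ cannot be a companion matrix, and $N$ will have rank $2$ rather than rank $1$. I take $A$ in companion form with last column $(u_0,u_1,u_2,u_3)^T$. I look for $D=\left(\begin{array}{cc}0&r\\0&C\end{array}\right)$, where $r=(r_1,r_2,r_3)$ is a row and $C$ acts on $\operatorname{span}\{e_2,e_3,e_4\}$. I choose $C$ to be the companion matrix of $x(x+a)(x+a+u_3)$, so its last column is $(0,\ a(a+u_3),\ u_3)$. Since $a\neq0,u_3$, the three roots $0,a,a+u_3$ are distinct and $C$ is diagonalizable. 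Then $D$ is diagonalizable with eigenvalues $0,0,a,a+u_3$ exactly when $\dim\ker D=2$. With $v=(a(a+u_3),u_3,1)$ spanning $\ker C$, this amounts to the linear condition $r\cdot v=0$. The trace of $D$ is $u_3$, matching the trace of $A$, which is consistent with $N$ being nilpotent.

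The remaining condition is $N^2=0$ for $N=A-D$. Its first column is $e_2$, its row-$1$ entries in columns $2,3,4$ are $r_1,r_2,u_0+r_3$, and its last column also carries $u_1,\ u_2+a(a+u_3)$ in rows $2,3$. This is the main obstacle. The naive choice $r=0$ breaks diagonalizability when $u_0\neq0$. The choice $r=(0,u_0u_3^{-1},u_0)$, which satisfies $r\cdot v=0$ and uses $u_3\neq0$, gives $N^2e_3=r_2e_2\neq0$ in general. So the block form alone is too rigid.

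My first attempt at repair is to conjugate by a unipotent change of basis, replacing $e_1$ by $e_1+\lambda e_2+\mu e_3$ (equivalently, also allowing entries of $D$ in the first column below the diagonal). The aim is to make $\operatorname{Im}N$ a 2-dimensional subspace contained in $\ker N$, for instance $\operatorname{Im}N=\operatorname{span}\{e_1,e_2\}$, while keeping the kernel condition $r\cdot v=0$. This gives a small polynomial system in $\lambda,\mu,r_i$, and I expect the hypothesis $u_3\neq0$ to be exactly what makes it solvable, by dividing by $u_3$. If that fails, the fallback is to find $D$ as a direct sum of a $1\times1$ block $0$ and a $3\times3$ block as in Proposition~\ref{case3}, after a suitable change of basis of $\FF^4$ that is adapted to $A$.
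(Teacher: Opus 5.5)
Your part (b) is correct and is the paper's argument (the rank-one decomposition of \cite[Lemma 3.1]{DGL1}): $N=A+D_4$ is supported in the last column with zero $(4,4)$ entry, and $D_4$ is the companion matrix of $x(x+1)(x+a)(x+a+1)$, which has four distinct roots.

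Part (a), however, is not proved. You never exhibit $N$ and $D$, and the repair you actually describe cannot work. Suppose $D$ keeps a zero first column in the new basis $\{e_1',e_2,e_3,e_4\}$, $e_1'=e_1+\lambda e_2+\mu e_3$; your condition $r\cdot v=0$ presupposes this. Then requiring $e_1'\in\operatorname{Im}N\subseteq\ker N$ gives $Ae_1'=Ne_1'+De_1'=0$, whereas $Ae_1'=e_2+\lambda e_3+\mu e_4\neq0$.

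Suppose instead you drop the zero column and only ask $\operatorname{Im}N=\ker N=\operatorname{span}\{e_1',e_2\}$. Then $\operatorname{Im}D=\operatorname{span}\{e_2+\lambda e_3+\mu e_4,\,e_3\}$ is forced, and $\mu\neq0$. Matching the eigenvalues $a,a+u_3$ on $\operatorname{Im}D$ then comes down to $u_0\mu^2+(u_2+a^2+au_3)\mu+1=0$. This need not be solvable for the prescribed $a$: over $\FF_4$, with $a^2+a+1=0$, $u_0=u_3=1$ and $u_2=a^2$, it reads $\mu^2+a\mu+1=0$, which has no root in $\FF_4$. The fallback ``$0\oplus$ a $3\times3$ block'' only restates the goal, since every diagonalizable matrix with eigenvalue $0$ has that shape in a suitable basis.

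The missing idea is to give $N$ a second nilpotent $2\times2$ block, on $\operatorname{span}\{e_3,e_4\}$. This block simultaneously cancels the subdiagonal $(4,3)$ entry of $A$ and moves the $(4,4)$ entry from $u_3$ to $a$. With $t=a+u_3$, the paper takes
$$
N=\left(\begin{array}{cccc}0&0&0&0\\1&0&0&0\\0&0&t&t^2\\0&0&1&t\end{array}\right),\qquad
D=A+N=\left(\begin{array}{cccc}0&0&0&u_0\\0&0&0&u_1\\0&1&t&u_2+t^2\\0&0&0&a\end{array}\right).
$$
Here $N^2=0$ because $\left(\begin{smallmatrix}t&t^2\\1&t\end{smallmatrix}\right)^2=0$ in characteristic $2$. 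The matrix $D$ leaves $\operatorname{span}\{e_1,e_2,e_3\}$ invariant, kills $e_1$ and $te_2+e_3$, sends $e_3$ to $te_3$, and acts as multiplication by $a$ on the quotient. We have $t\neq0$ (as $a\neq u_3$), $a\neq0$, and $a\neq t$ (as $u_3\neq0$). Hence $D$ is diagonalizable with eigenvalues $0,0,a+u_3,a$. So the hypothesis $u_3\neq0$ is used to separate $a$ from $a+u_3$, not as something to divide by.
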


\begin{proof}
{\bf Case (a):} If $\text{Trace}(B)=u_3\ne 0$, we know that $B$ is similar to a matrix of the form
$$A=\left(
  \begin{array}{cccc}
    0 & 0 & 0 & u_0 \\
    1 & 0 & 0 & u_1 \\
    0 & 1 & 0 & u_2 \\
   0 & 0 & 1 & u_3 \\
  \end{array}
\right).$$ Choose $a\in\FF$ such that $a\ne 0, u_3$, and consider the square-zero matrix
$$N:=\left(
  \begin{array}{cccc}
    0 & 0 & 0 & 0 \\
    1 & 0 & 0 & 0 \\
    0 & 0 & a+u_3 & a^2+u_3^2 \\
   0 & 0 & 1 & a+u_3 \\
  \end{array}
\right).$$
So, $A=N+D$ for
$$D:= \left(
  \begin{array}{cccc}
    0 & 0 & 0 & u_0 \\
    0 & 0 & 0 & u_1 \\
    0 & 1 & a+u_3 & u_2+a^2+u_3^2 \\
   0 & 0 & 0 & a \\
  \end{array}
\right),$$ where $D$ is diagonalizable with eigenvalues $0,0,a+u_3,a$ and basis of eigenvectors $$\{e_1, (a+u_3) e_2+ e_3, e_3, u_0 u_3 e_1+u_1u_3 e_2+ (au_3^2+au_2+u_1+a^3)e_3+a u_3 e_4\}.$$

{\bf Case (b):} If $\text{Trace}(B)=0$ for every $a \in \FF$ such that $a\ne 0,1$, we can follow the same argument as given in \cite[Lemma 3.1]{DGL1} to decompose $B$ into $N+D$, where $N$ is a square-zero matrix of rank one and $D$ is diagonalizable with 4 different eigenvalues $0,1,a,a+1$.
\end{proof}

A critical consequence is as follows:

\begin{corollary}\label{case4-corollary}
If $\FF=\FF_2$ and $B$ is a non-derogatory matrix of order 4 over $\FF$, then $B$ can be expressed as the sum of a square-zero matrix and a potent matrix $D$ with $D^4=D$.
\end{corollary}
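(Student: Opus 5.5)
The plan is to split according to the trace $u_3\in\FF_2$ of $B$. Throughout, I assume without loss of generality that $B$ is the companion matrix $A$ of its characteristic polynomial, with last column $(u_0,u_1,u_2,u_3)^T$ and $u_i\in\FF_2$. The goal is the same as in Corollary~\ref{case3-corollary}. We want to produce $N,D$ with all entries in $\FF_2$, $N^2=0$, and $D$ diagonalizable over $\FF_4$ with eigenvalues in $\FF_4$. The latter gives $D^4=D$, because every $\lambda\in\FF_4$ satisfies $\lambda^4=\lambda$.

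\textbf{Trace zero.} Use Proposition~\ref{case4}(b) with $a\in\FF_4$ chosen so that $a^2+a+1=0$. Then $a^2+a=1$, so the matrices $D_4$ and $N_4$ of Definition~\ref{notation} have entries in $\FF_2$. Concretely, $D$ is the companion matrix of $x(x+1)(x^2+x+1)=x^4+x$. Its last column is $(0,1,0,0)^T$, so $D^4=D$ can be read off directly, and $D$ is diagonalizable over $\FF_4$ with eigenvalues $0,1,a,a+1$. The matrix $N=A-D$ is supported in the fourth column, rows $1$ to $3$, with entries $(u_0,u_1+1,u_2)$. Since its $(4,4)$ entry is $0$, we get $N^2=0$. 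This case is essentially a specialization, exactly as in Corollary~\ref{case3-corollary}.

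\textbf{Trace one.} This is the main obstacle. Proposition~\ref{case4}(a) with $u_3=1$ and $a\in\FF_4\setminus\FF_2$ yields a $D$ with eigenvalues $0,0,a,a+1$, which is the correct spectrum. However, its entries $a+1$ and $a^2+1=a$ do not lie in $\FF_2$, so it cannot be used verbatim. Idempotent $D$ is also ruled out, by \v{S}ter's example $A$ in the introduction, which has trace $1$. So the target must be an $\FF_2$-matrix with minimal polynomial dividing $x(x^2+x+1)$ or $(x+1)(x^2+x+1)$, and trace $1$.

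I would look for a rank-two square-zero $N$ over $\FF_2$, modelled on the $N$ in Proposition~\ref{case4}(a). Keep the entry $N_{21}=1$, so that $De_1=0$. Then replace the triangular lower-right block of $D$ by a block whose characteristic polynomial is $x^2+x+1$, and adjust the remaining entries of the fourth column to absorb $u_0,u_1,u_2$.

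Concretely, I would make the ansatz that $D$ is block upper triangular, with a $2\times2$ block acting on $\langle e_1,e_2\rangle$ that is zero, and a companion block of $x^2+x+1$ on the quotient $\langle e_3,e_4\rangle$. I would then solve the linear conditions for $N$ to be square-zero (for instance $N$ supported on the $(2,1)$ entry plus entries in the span of $e_3,e_4$ forming a square-zero block), checking $N^2=0$ directly. Diagonalizability of such a block triangular $D$ requires a Sylvester-type condition. Because the spectra $\{0\}$ and $\{a,a+1\}$ of the diagonal blocks are disjoint, the off-diagonal block can always be eliminated, so $D$ is diagonalizable as soon as the zero block is genuinely zero.

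If the general ansatz runs into parity constraints, I would fall back on the alternative spectrum $1,1,a,a+1$, which also has trace $1$. Failing that, I would treat the eight companion matrices of trace one (parametrized by $u_0,u_1,u_2\in\FF_2$) case by case, exhibiting explicit $N$ and $D$ in each. Even in that case, each verification that $N^2=0$ and $D^4=D$ is a finite computation over $\FF_2$.
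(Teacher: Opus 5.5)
Your trace-zero case is correct and is exactly the paper's argument: $D$ is the companion matrix of $x^4+x$, and $N$ lives in the fourth column with entries $u_0,u_1+1,u_2$.

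The trace-one case, however, is not proved, and the concrete ansatz you propose cannot succeed. If $D$ is block upper triangular with a zero block on $\langle e_1,e_2\rangle$, then $De_1=De_2=0$. Hence $Ne_1=Ae_1=e_2$ and $Ne_2=Ae_2=e_3$, so $N^2e_1=e_3\neq 0$ whatever the other entries are. More generally, $De_1=0$ together with $N^2=0$ already forces $Ne_2=0$, i.e.\ $De_2=e_3$. So $D$ can never preserve $\langle e_1,e_2\rangle$.

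The looser version modelled on Proposition~\ref{case4}(a) fails too. There you keep $Ne_1=e_2$ and $Ne_2=0$, put $u_0,u_1$ into the fourth column of $D$, and place a block with characteristic polynomial $x^2+x+1$ in the lower right corner. Over $\FF_2$ this forces that block to be the companion matrix of $x^2+x+1$, and $N=E_{21}+(u_2+1)E_{34}$. The resulting $D$ has characteristic polynomial $x(x^3+x^2+x+u_1)$ and, when $u_0=1$, a one-dimensional kernel. So it is not diagonalizable with eigenvalues $0,0,a,a+1$ precisely for the two irreducible trace-one quartics $x^4+x^3+1$ and $x^4+x^3+x^2+x+1$, which are the only hard cases. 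Your fallbacks (switch to spectrum $1,1,a,a+1$, or check the eight companion matrices one by one) restate the goal rather than prove it. No decomposition is actually exhibited for any trace-one matrix.

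What is missing is the paper's reduction plus one explicit computation. If the characteristic polynomial of $B$ is not a power of a single irreducible, then $B$ is similar to a direct sum of companion matrices of pairwise coprime factors of degree at most $3$. These are handled by Proposition~\ref{case2} (blocks with $D_i^2=D_i$) and Corollary~\ref{case3-corollary} (blocks with $D_i^4=D_i$). Since $x^4$, $(x+1)^4$ and $(x^2+x+1)^2$ all have trace $0$, only $x^4+x^3+1$ and $x^4+x^3+x^2+x+1$ remain.

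For the companion matrix $A$ of $x^4+x^3+1$, the paper takes $N=\left(\begin{smallmatrix}1&1\\1&1\end{smallmatrix}\right)\oplus\left(\begin{smallmatrix}0&1\\0&0\end{smallmatrix}\right)$. Then $D=A+N$ has characteristic polynomial $(x+1)^2(x^2+x+1)$ and $D+\Id$ has rank $2$, so $D$ is diagonalizable over $\FF_4$ and $D^4=D$. The polynomial $x^4+x^3+x^2+x+1$ reduces to this case by passing to $B+\Id$, whose characteristic polynomial is $x^4+x^3+1$.

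Your preferred spectrum $0,0,a,a+1$ is in fact attainable for $x^4+x^3+1$. For example, take $D$ with columns $0,\ e_3,\ e_1+e_2+e_3,\ e_3$, which satisfies $D^3+D^2+D=0$, with $N=A+D$ square-zero. But this needs $N$ to have nonzero entries in rows $1,2$ of columns $3,4$, which lies outside your ansatz. You would still have to find and verify such matrices.
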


\begin{proof} We distinguish two basic cases as follows.

\medskip

{\bf Case (a):} If $\text{Trace}(B)=1$, let us consider the rational canonical decomposition of $B$ with respect to its invariant factors.

\medskip

\noindent -- If all invariant factors have degree less than or equal to three, their corresponding companion matrices can be expressed as the sum of a square-zero matrix and a potent matrix of index less than or equal to 4 in conjunction with Proposition \ref{case2} and Corollary \ref{case3-corollary}.

\medskip

\noindent -- If there is only one invariant factor of degree 4, it is necessarily of the form $x^4+x^3+1$ or $x^4+x^3+x^2+x+1$ (notice that it {\it cannot} be the square of the irreducible polynomial of degree 2 over $\FF_2$, because it has trace equal to one). Suppose that $B$ is similar to the companion matrix of $x^4+x^3+1$, i.e., $B$ is similar to a matrix of the form
$$
A=\left(
    \begin{array}{cccc}
      0 & 0 & 0 & 1 \\
      1 & 0 & 0 & 0 \\
      0 & 1 & 0 & 0 \\
      0 & 0 & 1 & 1 \\
    \end{array}
  \right).
$$
Then,
$$
\underbrace{\left(
    \begin{array}{cccc}
      0 & 0 & 0 & 1 \\
      1 & 0 & 0 & 0 \\
      0 & 1 & 0 & 0 \\
      0 & 0 & 1 & 1 \\
    \end{array}
  \right)}_A=\underbrace{\left(
    \begin{array}{cccc}
      1 & 1 & 0 & 0 \\
      1 & 1 & 0 & 0 \\
      0 & 0 & 0 & 1 \\
      0 & 0 & 0 & 0 \\
    \end{array}
  \right)}_N+\underbrace{\left(
    \begin{array}{cccc}
      1 & 1 & 0 & 1 \\
      0 & 1 & 0 & 0 \\
      0 & 1 & 0 & 1 \\
      0 & 0 & 1 & 1 \\
    \end{array}
  \right)}_D,
$$
where $N^2=0$ and $D^4=D$, because $D$ is diagonalizable over $\FF_4$ with eigenvalues $1,1, a$ and $a+1$, where $a\in \FF_4$ satisfies $a^2+a+1=0$.

If $B$ is similar to the companion matrix of $x^4+x^3+x^2+x+1$, let us consider the matrix $B+\Id$, which is again non-derogative with characteristic polynomial equal to $x^4+x^3+1$. Hence, $B+\Id$ is the sum of a square-zero matrix $N'$ and a diagonalizable matrix $D'$ over $\FF_4$ with eigenvalues $1,1, a$ and $a+1$, so that $B=N'+(D'+\Id)$, where $D'+\Id$ has eigenvalues $0,0, a, a+1$, whence $(D'+\Id)^4=D'+\Id$.

\medskip

{\bf Case (b):} If $\text{Trace}(B)=0$, we know that $B$ is similar to a matrix of the form
$$
A=\left(
    \begin{array}{cccc}
      0 & 0 & 0 & u_0 \\
      1 & 0 & 0 & u_1 \\
      0 & 1 & 0 & u_2 \\
      0 & 0 & 1 & 0 \\
    \end{array}
  \right).
$$ Then,
$$
\underbrace{\left(
    \begin{array}{cccc}
      0 & 0 & 0 & u_0 \\
      1 & 0 & 0 & u_1 \\
      0 & 1 & 0 & u_2 \\
      0 & 0 & 1 & 0 \\
    \end{array}
  \right)}_A=\underbrace{\left(
    \begin{array}{cccc}
      0 & 0 & 0 & u_0 \\
      0 & 0 & 0 & u_1+1 \\
      0 & 0 & 0 & u_2 \\
      0 & 0 & 0 & 0 \\
    \end{array}
  \right)}_N+\underbrace{\left(
    \begin{array}{cccc}
      0 & 0 & 0 & 0 \\
      1 & 0 & 0 & 1 \\
      0 & 1 & 0 & 0 \\
      0 & 0 & 1 & 0 \\
    \end{array}
  \right)}_D,
$$
where $N^2=0$ and $D$ is diagonalizable with eigenvalues $0,1,a,a+1$ for $a\in \FF_4$ such that $a^2+a+1=0$; hence $D^4=D$.
\end{proof}

Now, we examine non-derogative matrices of order $4k$, where $k\ge 2$.

\begin{proposition}\label{4k}
Let $\FF$ be a field of characteristic 2 with $\#\FF\ge4$, and let $B$ be a non-derogative matrix over $\FF$ of order $4k$ with $k\ge 2$. Then, for every $a\in \FF$ with $a\ne 0,1$, we have:
  \begin{itemize}
\item[(a)] If $\text{Trace}(B)=c\ne 0$, there exists a square-zero matrix $N\in {M}_{4k}(\FF)$ such that $B=N+D$, where $D$ is diagonalizable with eigenvalues $0, c, ca,c(a+1)$.
\item[(b)] If $\text{Trace}(B)=0$, there exists a square-zero matrix $N\in {M}_{4k}(\FF)$ such that $B=N+D$, where $D$ is diagonalizable with eigenvalues $0,1,a$ and $a+1$.
  \end{itemize}
\end{proposition}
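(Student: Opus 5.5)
I would argue by reducing $B$ to its companion form and then decomposing into $4\times 4$ diagonal blocks glued by square-zero off-diagonal corrections.

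\emph{Reduction of case (a) to case (b).} If $\text{Trace}(B)=c\ne 0$, then $c^{-1}B$ is non-derogative with trace $1$. A decomposition $c^{-1}B=N'+D'$ with $D'$ having eigenvalues $0,1,a,a+1$ gives $B=cN'+cD'$, where $cD'$ has eigenvalues $0,c,ca,c(a+1)$. So it suffices to handle the normalized trace (either $0$ or $1$) with target spectrum $\{0,1,a,a+1\}$. Note that $0+1+a+(a+1)=0$, so each block of $D_4$ type has trace $0$. A trace-$1$ correction must therefore come from somewhere. I would absorb it by replacing one block with the case (a) construction of Proposition~\ref{case4}: with $u_3=1$ and the choice $a$, it has eigenvalues $0,0,a,a+1$. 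Since $\{0,a,a+1\}\subset\{0,1,a,a+1\}$, this is harmless.

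\emph{Main construction.} Take $B$ as the companion matrix $C$ of $x^{4k}+u_{4k-1}x^{4k-1}+\dots+u_0$. The companion matrix is a single cyclic shift $e_i\mapsto e_{i+1}$ plus a last column. Cut the basis into $k$ consecutive blocks of size $4$. Inside each block, use the pattern of $D_4$ and $N_4$ from Definition~\ref{notation}.

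\begin{itemize}
\item The $4\times 4$ diagonal blocks of $D$ are $D_4$, possibly with the top-left row adjusted. Each such block has subdiagonal $1$'s and eigenvalues $0,1,a,a+1$.
\item The subdiagonal entries linking block $j$ to block $j+1$ are matrices $Q_{4,4}$-like. These are single $1$'s in position (first row of block $j+1$, last column of block $j$). I place them in $D$, so that $D$ is block lower triangular with diagonal blocks $D_4$.
\item The last column of $C$, with its coefficients $u_i$, goes partly into $N$. This is in the spirit of $N_3$ and $N_4$, which are supported on a single column whose row index set misses the column index, and hence are square-zero.
\end{itemize}

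Then $D$ is block lower triangular with each diagonal block diagonalizable with distinct eigenvalues $0,1,a,a+1$. Diagonalizability of $D$ then needs the minimal polynomial $x(x+1)(x+a)(x+a+1)$ to kill $D$. Because $D_4$ has first row zero, the off-diagonal couplings land in the kernel parts. So I would verify $p(D)=0$ for $p=x(x+1)(x+a)(x+a+1)$ by a block computation: $p(D)$ is strictly block lower triangular, and its entries are of the form $p$-divided-difference terms. I expect these to vanish precisely because of the zero first row of $D_4$ and the choice of $N_4$.

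\emph{Remaining adjustment and the main obstacle.} With $D$ fixed, $N=C-D$ consists of the last column of $C$ minus the corresponding column of $D$, plus the within-block $N_4$ columns. I must check $N^2=0$. The issue is that $N$ has nonzero columns at the last index of every block (the $N_4$ pieces), and also at the final column with the arbitrary $u_i$. For $N^2=0$, the row supports must avoid those column indices. The $N_4$ pieces live in rows $2,3$ of each block, which is fine. But the final column carries $u_i$ in every row, including the rows indexed by the last vectors of blocks.

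This is the step I expect to be hardest. I would first try to move the $u$-dependence into $D$: make the last diagonal block (or the last column of $D$) absorb the coefficients so that $D$ remains block triangular with the right characteristic polynomial on the last block. Alternatively, I would conjugate by a unipotent matrix to clear the entries of the final column in the forbidden rows. This adds entries of $D$ above the diagonal in a controlled way, and those entries must again be shown not to spoil $p(D)=0$.

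If this fails, my fallback is the induction used in \cite[Lemma 3.1]{DGL1}. That is, write $C$ as the companion of order $4(k-1)$ glued to one of order $4$ via a rank-one $Q_{4,4(k-1)}$ term, then apply Proposition~\ref{case4} to the order-$4$ part and induction to the rest, checking that the rank-one glue can be split between $N$ and $D$ while preserving square-zero and diagonalizability.
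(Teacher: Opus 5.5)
There is a genuine gap. It sits in the main construction, not in the step you flag as hardest.

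\textbf{Why the main construction fails.} Once the inter-block links $Q_{4,4}$ are placed in $D$, every subdiagonal entry of $D$ equals $1$, both those inside the blocks $D_4$ and the links. Moreover $D$ is upper Hessenberg, since adjusting first rows of blocks or the last column only touches entries on or above the diagonal. Hence $De_j\in e_{j+1}+\mathrm{span}(e_1,\dots,e_j)$ for $j<4k$. So $e_1$ is a cyclic vector, and the minimal polynomial of $D$ has degree $4k\ge 8$. It can never divide $x(x+1)(x+a)(x+a+1)$, whatever you later do with the last column. In fact a single block $D_4$ adjacent to a link kept in $D$ already gives four consecutive unit subdiagonal entries. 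That alone forces the minimal polynomial of $D$ to have degree at least $5$. So swapping in the block of Proposition~\ref{case4}(a) to fix the trace does not help within this scheme.

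Concretely, take $D=\begin{pmatrix}D_4&0\\ Q_{4,4}&D_4\end{pmatrix}$. An eigenvector $x$ of $D_4$ for $\lambda$ has $x_4\neq 0$. Extending it to an eigenvector $(x,y)$ of $D$ needs $(D_4+\lambda\Id)y=x_4e_1$. This is impossible, because $(1,\lambda,\lambda^2,\lambda^3)$ is a left eigenvector of $D_4$ for $\lambda$ and does not annihilate $e_1$. So every eigenvalue acquires a $2\times 2$ Jordan block. The zero first row of $D_4$ is precisely what \emph{prevents} the coupling from being absorbed, contrary to your expectation.

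\textbf{The missing idea: the links belong in $N$.} The paper takes $N$ with diagonal blocks $N_4,N_3$ and subdiagonal blocks $Q_{4,4},Q_{3,4},Q_{3,3},Q_{2,3}$. In case (b) the subdiagonal blocks are $Q_{4,1},Q_{4,4},Q_{3,4}$. Then $D$ is block diagonal plus entries $T_s$ in the last column. The block sizes are $4,\dots,4,3,3,2$ for trace $1$ and $1,4,\dots,4,3$ for trace $0$; each $D_4$ and $D_3$ has trace $0$ and $D_2$ has trace $1$, so the trace comes out right.

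With this choice, $N$ is supported on the columns indexed by the last coordinate of each block. Its rows with those indices vanish once the coefficients of the last column lying there are moved into $D$ through the $T_s$. So $N^2=0$ is the easy part. The real work is the diagonalizability of this block upper triangular $D$ with repeated spectrum. The paper settles it by writing explicit eigenvectors that extend those of the final small block.

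\textbf{The fallback induction does not reach this either.} Splitting the companion matrix as $4(k-1)+4$ leaves the leading block a bare nilpotent shift, with the coefficients in an off-diagonal block. So no inductive hypothesis applies to it, and the ``check'' you defer is the whole difficulty.
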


\begin{proof}
{\bf Case (a):} Suppose that $\text{Trace}(B)=c\ne 0$. Without loss of generality, let us replace $B$ by $A=c^{-1}B$, which is again a non-derogative matrix which possesses trace equal to 1; thereby, we can suppose without loss of generality that $A$ has the form
$$
A=\left(\begin{array}{cccccc}
                                                                          0 & 0 & \cdots & 0 & 0 &u_0\\
                                                                          1 & 0 & \cdots & 0 & 0 &u_1\\
                                                                          0 & 1 & \cdots & 0 & 0 &u_2\\
                                                                          \vdots & \vdots & \ddots & \vdots & \vdots &\vdots\\
                                                                          0 & 0 & \cdots & 1 & 0 &u_{4k-2}\\
                                                                          0 & 0 & \cdots & 0 & 1 &1
                                                                        \end{array}\right).$$
We are now going to build a square-zero matrix $N$ such that $A=N+D$, where $D$ can be expressed as the direct sum of $k-2$ diagonalizable blocks of order $4$, two diagonalizable blocks of order $3$ and one diagonalizable block of order $2$. To that goal, let us consider
$$
M_s:=\left(
   \begin{array}{cc}
      0 &u_{4s-4}\\
      0 &u_{4s-3}\\
      0 &u_{4s-1}+u_{4s-2}\\
      0 &0\\
   \end{array}
 \right) \hbox{ for  $s=1,\dots, k-2$,}
$$
\small
 $$M_{k-1}:=\left(
   \begin{array}{cc}
      0 &u_{4k-8}\\
      0 &u_{4k-6}+u_{4k-7}\\
      0 &0\\
   \end{array}
 \right),\  M_{k}:=\left(
   \begin{array}{cc}
      0 &u_{4k-5}\\
      0 &u_{4k-3}+u_{4k-4}\\
      0 &0\\
   \end{array}
 \right),\ M_{k+1}:=\left(
                  \begin{array}{cc}
                    0 & u_{4k-2} \\
                    0 & 0 \\
                  \end{array}
                \right),$$\normalsize
and put
$$N:=\left(
      \begin{array}{cccccccc}
        N_4  & 0 &\cdots & 0& 0 & 0 & 0&  M_1 \\
        Q_{4,4}  &N_4 & \cdots & 0 & 0 & 0& 0  & M_2 \\
        \vdots &\ddots & \ddots & \vdots& \vdots & \vdots& 0  & \vdots \\
        0 &0 & \ddots & N_4 & 0& 0 & 0&  M_{k-3} \\
        0 &0 & \cdots & Q_{4,4} & N_4& 0& 0 &  M_{k-2} \\
        0 &0 & \cdots & 0 & Q_{3,4}& N_3& 0 &  M_{k-1} \\
        0 &0 & \cdots & 0 & 0& Q_{3,3} &N_3&   M_k \\
        0 &0 & \cdots & 0 &0 & 0& Q_{2,3} &  M_{k+1} \\
      \end{array}
    \right)\in M_{4k}(\mathbb{F}).
$$
It can be verified that $N_4 M_s= Q_{4,4} M_s= M_{3,4} Q_s=0$ for $s=1\dots, k-2$, $N_3 M_{k-1}=N_3 M_{k}=Q_{3,3} M_{k-1}= Q_{2,3} M_k=0$, and $M_{k+1}M_{k+1}=0$, so we deduce that $N$ is a square-zero matrix.

Moreover, if we denote
$$T_s:=\left(
   \begin{array}{cc}
       0 &0\\
       0 &0\\
       0 &u_{4s-1}\\
       0 &u_{4s-1}\\
   \end{array}
 \right),\hbox{ for $s=1\dots, k-2$,}
$$
$$T_{k-1}:=\left(
   \begin{array}{cc}
        0 & 0\\
        0 &u_{4k-6}\\
        0 &u_{4k-6}\\
   \end{array}
 \right),\ T_{k}:=\left(
   \begin{array}{cc}
        0 & 0\\
        0 &u_{4k-3}\\
        0 &u_{4k-3}\\
   \end{array}
 \right),
 $$
we have that $A=N+D$, where
$$D:=\left(
         \begin{array}{cccccc}
           D_4 & \cdots & 0 & 0 & 0 & T_1 \\
           \vdots & \ddots & \vdots & \vdots & \vdots & \vdots \\
           0 & \cdots & D_4 & 0 & 0 & T_{k-2} \\
           0 & \cdots & 0 & D_3 & 0 & T_{k-1} \\
           0 & \cdots & 0 & 0 & D_3 & T_k \\
           0 & \cdots & 0 & 0 & 0 & D_2 \\
         \end{array}
       \right).$$
Furthermore, since $D_4$ is diagonalizable, for any $s=1\dots, k-2$, the vectors
\begin{align*}
 v_{4s-3}&=(a^2+a)e_{4s-3}+(a^2+a+1)e_{4s-2}+e_{4s},\\
 v_{4s-2}&=(a^2+a)e_{4s-2}+e_{4s-1}+e_{4s},\\
 v_{4s-1}&=(a+1)e_{4s-2}+a e_{4s-1}+e_{4s},\\
 v_{4s}&=a e_{4s-2}+(a+1) e_{4s-1}+e_{4s}
\end{align*}
form a set of $4k-8$ independent eigenvectors of $D$ associated to the eigenvalues $0,1,a,a+1$. Moreover, since $D_3$ is diagonalizable for $r=0,1$ the vectors
\begin{align*}
 v_{4k-7+3r}&=(a^2+a)e_{4k-7+3r}+e_{4k-6+3r}+e_{4k-5+3r},\\
 v_{4k-6+3r}&=(a+1)e_{4k-7+3r}+a e_{4k-6+3r}+e_{4k-5+3r},\\
 v_{4k-5+3r}&=a e_{4k-7+3r}+(a+1) e_{4k-6+3r}+e_{4k-5+3r}\\
\end{align*}
form a set of $6$ independent eigenvectors of $D$ associated to the eigenvalues $1,a,a+1$, and the following vectors are eigenvectors of $D$ associated to the eigenvalues $0,1$:
\begin{align*}
  v_{4k-1}&=\sum_{i=1}^{k-2}    (u_{4i-1} e_{4i-2}+ u_{4i-1} e_{4i-1})+ \sum_{j=0}^{1} (u_{4k-6+3j} e_{4k-7+3j}+u_{4k-6+3j} e_{4k-6+3j})\\&+    e_{4k-1}+   e_{4k}, \quad  \\
  v_{4k}&=\sum_{i=1}^{k-2}   ((a^2+a) u_{4i-1} e_{4i-2}+ u_{4i-1} e_{4i})\\& +\sum_{j=0}^{1}((a^2+a) u_{4k-6+3j} e_{4k-7+3j} +u_{4k-6+3j} e_{4k-5+3j})+ e_{4k}.
\end{align*}
Consequently, $\{v_1,\dots, v_{4k}\}$ is a basis of eigenvectors of $D$, and hence $D$ is diagonalizable. Observe in addition that, due to the structure of the matrix
$D$, it is readily verified that $v_{4k-1}$ and $v_{4k}$ are eigenvectors associated to $0,1$ within a $9\times 9$ matrix
$$D':=\left(
  \begin{array}{ccc}
    D_4 &0& T_1 \\
    0 & D_3& T_k \\
     0 & 0& D_2 \\
  \end{array}
\right).
$$
We thus have expressed $A$ as the sum of a square-zero matrix and a diagonalizable matrix with eigenvalues $0,1,a,a+1$, whence $B=cA$ can also be expressed as the sum of a square-zero matrix and a diagonalizable matrix with eigenvalues $0,c,ca, c(a+1)$.

\medskip

{\bf Case (b):} Suppose that $\text{Trace}(B)= 0$. Suppose also without loss of generality that $B$ has the form
$$
B=\left(\begin{array}{cccccc}
                                                                          0 & 0 & \cdots & 0 & 0 &u_0\\
                                                                          1 & 0 & \cdots & 0 & 0 &u_1\\
                                                                          0 & 1 & \cdots & 0 & 0 &u_2\\
                                                                          \vdots & \vdots & \ddots & \vdots & \vdots &\vdots\\
                                                                          0 & 0 & \cdots & 1 & 0 &u_{4k-2}\\
                                                                          0 & 0 & \cdots & 0 & 1 &0
                                                                        \end{array}\right).$$
We are now going to build a square-zero matrix $N$ such that $B=N+D$ and $D$ can be decomposed as the direct sum of one diagonalizable block of order $1$, $k-1$ diagonalizable blocks of order $4$ and one diagonalizable block of order $3$: To that end, let us consider
$$M_s:=\left(
   \begin{array}{ccc}
     0 & 0 &u_{4s-3}+(a^2+a) u_{4s}\\
     0 & 0 &u_{4s-2}+(a^2+a+1) u_{4s}\\
     0 & 0 &u_{4s-1}\\
     0 & 0 &0\\
   \end{array}
 \right),\hbox{ for $s=1,\dots, k-1$,}
$$
$$M_{k}:=\left(
   \begin{array}{ccc}
      0& 0 &u_{4k-3}+a^2+a\\
      0 & 0 &u_{4k-2}+a^2+a+1\\
      0 & 0 &0\\
   \end{array}
 \right)$$
and put
$$N:=\left(
      \begin{array}{cccccccc}
        0 & 0 & 0 & \cdots & 0 &0 &  0 \\
        Q_{4,1} & N_4 & 0 & \cdots &0 & 0  & M_1 \\
        0 & Q_{4,4} & N_4 & \cdots &0 & 0  & M_2 \\
        \vdots & \vdots & \ddots & \ddots  & \vdots & \vdots \\
        0 & 0 & 0 & \ddots &N_4 & 0 &  M_{k-2} \\
        0 & 0 & 0 & \cdots &Q_{4,4} & N_4 &  M_{k-1} \\
        0 & 0 & 0 & \cdots &0 & Q_{3,4} &  M_k \\
      \end{array}
    \right)\in M_{4k}(\mathbb{F}).
$$
It can be verified that $N_4 M_s=Q_{4,4} M_s$ for $s=1,\dots, k-1$, $Q_{3,4}M_{k-1}=0$, and $M_{k} M_{k}=0$, we conclude that $N$ is a square-zero matrix. Moreover, if we denote
$$
T_s:=\left(
   \begin{array}{ccc}
      0 & 0 &(a^2+a)u_{4s}\\
      0 & 0 &(a^2+a+1)u_{4s}\\
      0 & 0 &0\\
      0 &  &u_{4s}\\
   \end{array}
 \right),\hbox{$s=1\dots, k-1$, and}
$$
 $$T_0:=\left(
                 \begin{array}{ccc}
                   0 & 0 & u_0 \\
                 \end{array}
               \right),\quad T_{k}:=\left(
   \begin{array}{ccc}
       0 & 0 &a^2+a\\
       0 & 0 &a^2+a+1\\
       0 & 0 &0\\
   \end{array}
 \right),
 $$
we obtain that $B=N+D$, where
$$D:=\left(
         \begin{array}{ccccc}
           0 & 0 & \cdots  & 0 & T_0 \\
           0 & D_4 & \cdots  & 0 & T_1 \\
           \vdots & \vdots & \ddots  & \vdots & \vdots \\
           0 & 0 & \cdots  & D_4 & T_{k-1} \\
           0 & 0 & \cdots  & 0 & D_3 \\
         \end{array}
       \right).$$
Additionally, notice that $v_1=e_1$ is a eigenvector of $D$ associated to the eigenvalue $0$. Moreover, since $D_4$ is diagonalizable, for any $s=1,\dots, k-1$ the vectors
\begin{align*}
 v_{4s-2}&=(a^2+a)e_{4s-2}+(a^2+a+1)e_{4s-1}+e_{4s+1},\\
 v_{4s-1}&=(a^2+a)e_{4s-1}+e_{4s}+e_{4s+1},\\
 v_{4s}&=(a+1)e_{4s-1}+a e_{4s}+e_{4s+1},\\
 v_{4s+1}&=a e_{4s-1}+(a+1) e_{4s}+e_{4s+1}
\end{align*}
form a set of $4k-4$ independent eigenvectors of $D$ associated to $0,1,a,a+1$, and  the following vectors are eigenvectors of $D$ associated to the eigenvalues $1,a,a+1$:
\begin{align*}
  v_{4k-2}&=u_0 e_1+\sum_{i=1}^{k-1}  ( (a^2+a) u_{4i} e_{4i-2}+ (a^2+a+1) u_{4i} e_{4i-1}+ u_{4i} e_{4i+1})\\&+ (a^2+a) e_{4k-2}+  e_{4k-1}+ e_{4k},\\
  v_{4k-1}&=u_0 e_1+\sum_{i=1}^{k-1}  ( (a^2+a) u_{4i} e_{4i-2}+ a^2 u_{4i} e_{4i-1}+a u_{4i} e_{4i})\\&+ (a^2+a) e_{4k-2}+a^2  e_{4k-1}+ a e_{4k},\\
  v_{4k}&=u_0 e_1+\sum_{i=1}^{k-1}  ( (a^2+a) u_{4i} e_{4i-2}+ (a^2+1) u_{4i} e_{4i-1}+(a+1) u_{4i} e_{4i})\\&+ (a^2+a) e_{4k-2}+(a^2+1)  e_{4k-1}+ (a+1) e_{4k}.\\
\end{align*}
Therefore, $\{v_1,\dots, v_{4k}\}$ is a basis of eigenvectors of $D$, and hence $D$ is diagonalizable. Observe in addition that, due to the structure of the matrix
$D$, it is easily verified that $v_{4k-2}, v_{4k-1}$ and $v_{4k}$ are eigenvectors associated to the eigenvalues $1,a,a+1$ within a $7\times 7$ matrix
$$D':=\left(
  \begin{array}{cc}
    D_4 & T_1 \\
    0 & D_3\\
  \end{array}
\right).
$$
We thus have expressed $B$ as the sum of a square-zero matrix and a diagonalizable matrix with eigenvalues $0,1,a,a+1$, as promised.
\end{proof}

Our next consequence is the following.

\begin{corollary}\label{4k-corollary}
If $\FF=\FF_2$ and $B$ is a non-derogatory matrix of order $4k$ with $k\ge 2$ over $\FF$, then $B$ can be expressed as the sum of a square-zero matrix and a potent matrix $D$ with $D^4=D$.
\end{corollary}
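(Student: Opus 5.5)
The idea is to copy the proofs of Corollaries \ref{case3-corollary} and \ref{case4-corollary}: view $B\in M_{4k}(\FF_2)\subseteq M_{4k}(\FF_4)$ and apply Proposition~\ref{4k} over $\FF_4$ with $a$ a root of $x^2+x+1$. We must then check two things. First, both $N$ and $D$ built in that proof have entries in $\FF_2$. Second, $D$ is diagonalizable over $\FF_4$ with eigenvalues in $\{0,1,a,a+1\}=\FF_4$, so every eigenvalue $\lambda$ satisfies $\lambda^4=\lambda$, giving $D^4=D$.

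We split by the trace of $B$, which lies in $\FF_2$.

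\textbf{Trace $1$.} This is Case (a) of Proposition~\ref{4k} with $c=1$, so $A=B$ and no rescaling is needed. The entries of $N$ and $D$ come from three sources:
\begin{itemize}
\item the coefficients $u_i\in\FF_2$ and the constant $1$, through $M_s$, $T_s$, $Q_{r,s}$ and $D_2$ (with $u_1=1$);
\item the blocks $N_3$, $N_4$, $D_3$ and $D_4$ of Definition~\ref{notation}.
\end{itemize}
The only non-rational entries in the second group are $a^2+a$ and $a^2+a+1$. Since $a^2+a+1=0$, these equal $1$ and $0$. Hence $N,D\in M_{4k}(\FF_2)$. Over $\FF_4$, the proof of Proposition~\ref{4k} gives an explicit eigenvector basis for $D$ with eigenvalues among $0,1,a,a+1$, so $D^4=D$. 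Since $D^4-D$ is a matrix over $\FF_2$ that vanishes, this identity holds in $M_{4k}(\FF_2)$.

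\textbf{Trace $0$.} This is Case (b) of Proposition~\ref{4k}, and the same inspection applies. The blocks $M_s$, $M_k$, $T_0$, $T_s$ and $T_k$ involve only $u_i$, $a^2+a$ and $a^2+a+1$, so they reduce to $\FF_2$-entries. The blocks $N_4$, $D_4$ and $D_3$ reduce likewise. Again $N^2=0$ and $D$ is diagonalizable over $\FF_4$ with spectrum in $\FF_4$, so $D^4=D$.

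\textbf{Main obstacle.} Proposition~\ref{4k} requires $\#\FF\ge4$, which is why we pass to $\FF_4$. The only real work is confirming that every entry of the constructed $N$ and $D$ is a polynomial in $a$ through $a^2+a$ alone, so that it is fixed by the Frobenius $a\mapsto a+1$ and hence lies in $\FF_2$. This is read off block by block from the displayed formulas. If some entry involved $a$ other than through $a^2+a$, the fallback would be to check that $D$ is still Galois-invariant. Since $D$ is determined as $B-N$, it suffices that $N$ has $\FF_2$-entries, and for $N$ this is evident from its blocks.
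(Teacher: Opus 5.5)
Your proposal is correct and is essentially the paper's own proof. Both view $B$ over $\FF_4$ and apply the two cases of Proposition~\ref{4k} with $a^2+a+1=0$; both observe that $a^2+a=1$ and $a^2+a+1=0$ force $N$, and hence $D=B+N$, to have entries in $\FF_2$; and both get $D^4=D$ from diagonalizability over $\FF_4$ with spectrum in $\{0,1,a,a+1\}$. The one point you leave implicit is that the reduction of $B$ to companion form is a similarity over $\FF_2$ (because $B$ is non-derogatory over $\FF_2$), which is what lets the $\FF_2$-decomposition of the companion matrix transfer back to $B$.
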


\begin{proof} We distinguish two basic cases as follows.

\medskip

{\bf Case (a):} Suppose that $\text{Trace}(B)=1$. Suppose also, without loss of generality, that $B$ has the form
$$
B=\left(\begin{array}{cccccc}
                                                                          0 & 0 & \cdots & 0 & 0 &u_0\\
                                                                          1 & 0 & \cdots & 0 & 0 &u_1\\
                                                                          0 & 1 & \cdots & 0 & 0 &u_2\\
                                                                          \vdots & \vdots & \ddots & \vdots & \vdots &\vdots\\
                                                                          0 & 0 & \cdots & 1 & 0 &u_{4k-2}\\
                                                                          0 & 0 & \cdots & 0 & 1 &1
                                                                        \end{array}\right)\in M_{4k}(\FF_2)\subseteq M_{4k}(\FF_4).$$
Adapting Proposition \ref{4k} Case (a) for $\FF=\FF_4$, if we consider $a\in \FF_4$ such that $a^2+a+1=0$, then one finds that $B=N+D$, where
$$N=\left(
      \begin{array}{cccccccc}
        N_4  & 0 &\cdots & 0& 0 & 0 & 0&  M_1 \\
        Q_{4,4}  &N_4 & \cdots & 0 & 0 & 0& 0  & M_2 \\
        \vdots &\ddots & \ddots & \vdots& \vdots & \vdots& 0  & \vdots \\
        0 &0 & \ddots & N_4 & 0& 0 & 0&  M_{k-3} \\
        0 &0 & \cdots & Q_{4,4} & N_4& 0& 0 &  M_{k-2} \\
        0 &0 & \cdots & 0 & Q_{3,4}& N_3& 0 &  M_{k-1} \\
        0 &0 & \cdots & 0 & 0& Q_{3,3} &N_3&   M_k \\
        0 &0 & \cdots & 0 &0 & 0& Q_{2,3} &  M_{k+1} \\
      \end{array}
    \right)
$$
\noindent and
$$
D=\left(
         \begin{array}{cccccc}
           D_4 & \cdots & 0 & 0 & 0 & T_1 \\
           \vdots & \ddots & \vdots & \vdots & \vdots & \vdots \\
           0 & \cdots & D_4 & 0 & 0 & T_{k-2} \\
           0 & \cdots & 0 & D_3 & 0 & T_{k-1} \\
           0 & \cdots & 0 & 0 & D_3 & T_k \\
           0 & \cdots & 0 & 0 & 0 & D_2 \\
         \end{array}
       \right).
$$
Notice that $N\in M_{4k}(\FF_2)$, because
$$
N_3=\left(
           \begin{array}{ccc}
             0 & 0 & a^2+a \\
             0 & 0 & a^2+a+1 \\
             0 & 0 & 0 \\
           \end{array}
         \right)=\left(
           \begin{array}{ccc}
             0 & 0 & 1 \\
             0 & 0 & 0 \\
             0 & 0 & 0 \\
           \end{array}
         \right)\in M_3(\FF_2),
$$
$$
N_{4}=\left(
           \begin{array}{cccc}
             0 & 0 & 0 & 0 \\
             0 & 0 & 0 & a^2+a \\
             0 & 0 & 0 & a^2+a+1 \\
             0 & 0 & 0 & 0 \\
           \end{array}
         \right)=\left(
           \begin{array}{cccc}
             0 & 0 & 0 & 0 \\
             0 & 0 & 0 & 1 \\
             0 & 0 & 0 & 0 \\
             0 & 0 & 0 & 0 \\
           \end{array}
         \right)\in M_4(\FF_2)
$$
and, by construction, all $M_i$ and $Q_{i,j}$ with $i,j=1,\dots, k+1$ are matrices over $\FF_2$; whence $D=B+N\in M_{4k}(\FF_2)$. Moreover, we also have that $N^2=0$ and $D^4=D$, because $D$ is diagonalizable over $\FF_4$ with eigenvalues $0,1,a, a+1$.

\medskip

{\bf Case (b):} Suppose that $\text{Trace}(B)=0$. Suppose also, without loss of generality, that $B$ has the form
$$
B=\left(\begin{array}{cccccc}
                                                                          0 & 0 & \cdots & 0 & 0 &u_0\\
                                                                          1 & 0 & \cdots & 0 & 0 &u_1\\
                                                                          0 & 1 & \cdots & 0 & 0 &u_2\\
                                                                          \vdots & \vdots & \ddots & \vdots & \vdots &\vdots\\
                                                                          0 & 0 & \cdots & 1 & 0 &u_{4k-2}\\
                                                                          0 & 0 & \cdots & 0 & 1 &0
                                                                        \end{array}\right)\in M_{4k}(\FF_2)\subseteq M_{4k}(\FF_4).$$
Adapting Proposition \ref{4k} Case (b) for $\FF=\FF_4$, if we consider $a\in \FF_4$ such that $a^2+a+1=0$, then one detects that $B=N+D$, where
$$N=\left(
      \begin{array}{cccccccc}
        0 & 0 & 0 & \cdots & 0 &0 &  0 \\
        Q_{4,1} & N_4 & 0 & \cdots &0 & 0  & M_1 \\
        0 & Q_{4,4} & N_4 & \cdots &0 & 0  & M_2 \\
        \vdots & \vdots & \ddots & \ddots  & \vdots & \vdots \\
        0 & 0 & 0 & \ddots &N_4 & 0 &  M_{k-2} \\
        0 & 0 & 0 & \cdots &Q_{4,4} & N_4 &  M_{k-1} \\
        0 & 0 & 0 & \cdots &0 & Q_{3,4} &  M_k \\
      \end{array}
    \right)
$$
\noindent and
$$
D=\left(
         \begin{array}{ccccc}
           0 & 0 & \cdots  & 0 & T_0 \\
           0 & D_4 & \cdots  & 0 & T_1 \\
           \vdots & \vdots & \ddots  & \vdots & \vdots \\
           0 & 0 & \cdots  & D_4 & T_{k-1} \\
           0 & 0 & \cdots  & 0 & D_3 \\
         \end{array}
       \right).
$$
Notice that $N\in M_{4k}(\FF_2)$, because
$$
N_{4}=\left(
           \begin{array}{cccc}
             0 & 0 & 0 & 0 \\
             0 & 0 & 0 & a^2+a \\
             0 & 0 & 0 & a^2+a+1 \\
             0 & 0 & 0 & 0 \\
           \end{array}
         \right)=\left(
           \begin{array}{cccc}
             0 & 0 & 0 & 0 \\
             0 & 0 & 0 & 1 \\
             0 & 0 & 0 & 0 \\
             0 & 0 & 0 & 0 \\
           \end{array}
         \right)\in M_4(\FF_2),
$$
and, for $s=1,\dots, k-1$,
$$M_s=\left(
   \begin{array}{ccc}
     0 & 0 &u_{4s-3}+(a^2+a) u_{4s}\\
     0 & 0 &u_{4s-2}+(a^2+a+1) u_{4s}\\
     0 & 0 &u_{4s-1}\\
     0 & 0 &0\\
   \end{array}
 \right)=\left(
   \begin{array}{ccc}
     0 & 0 &u_{4s-3}+ u_{4s}\\
     0 & 0 &u_{4s-2}\\
     0 & 0 &u_{4s-1}\\
     0 & 0 &0\\
   \end{array}
 \right)\in M_{4,3}(\FF_2),
$$
$$M_{k}=\left(
   \begin{array}{ccc}
      0& 0 &u_{4k-3}+a^2+a\\
      0 & 0 &u_{4k-2}+a^2+a+1\\
      0 & 0 &0\\
   \end{array}
 \right)=\left(
   \begin{array}{ccc}
      0& 0 &u_{4k-3}+1\\
      0 & 0 &u_{4k-2}\\
      0 & 0 &0\\
   \end{array}
 \right)\in M_3(\FF_2)$$
and, by construction, all $Q_{i,j}$ with $i,j=1,\dots, k+1$ are matrices over $\FF_2$; whence $D=B+N\in M_{4k}(\FF_2)$. Moreover, we also have  that $N^2=0$ and $D^4=D$, because $D$ is diagonalizable over $\FF_4$ with eigenvalues $0,1,a, a+1$.
\end{proof}

In the following, we deal with non-derogative matrices of order $4k+1$, where $k\ge 1$.

\begin{proposition}\label{4k+1}
Let $\FF$ be a field of characteristic 2 with $\#\FF\ge4$, let $B$ be a non-derogative matrix over $\FF$ of order $4k+1$ with $k\ge 1$, and let $\text{Trace}(B)=c$. Then, for every $a\in \FF$, $a\ne 0,1$, there exists a square-zero matrix $N\in {M}_{4k+1}(\FF)$ such that $B=N+D$, where $D$ is diagonalizable with eigenvalues $c,c+1,c+a$, $c+a+1$.
\end{proposition}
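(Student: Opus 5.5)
Since adding a scalar commutes with everything, I will first reduce to trace zero. Replace $B$ by $B'=B+c\,\Id$. This matrix is again non-derogative of order $4k+1$, and its trace is $c+(4k+1)c=0$, using $4kc=0$ in characteristic 2. If I can write $B'=N+D'$ with $N^2=0$ and $D'$ diagonalizable with eigenvalues $0,1,a,a+1$, then $B=N+(D'+c\,\Id)$, where $D'+c\,\Id$ is diagonalizable with eigenvalues $c,c+1,c+a,c+a+1$. So it suffices to prove the following: every trace-zero non-derogative matrix of order $4k+1$ decomposes as square-zero plus diagonalizable with spectrum contained in $\{0,1,a,a+1\}$.

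For the trace-zero case I will imitate Case (b) of Proposition~\ref{4k}. Put $B'$ in companion form with last column $(u_0,\dots,u_{4k-1},0)^T$. I will build $N$ and $D$ block-wise, with $D$ block upper triangular in the following shape: a $1\times1$ zero block, then $k-1$ copies of $D_4$, then a final block. The sizes must sum to $1+4(k-1)+r=4k+1$, so the final block has order $r=4$. This is awkward, because $D_4$ has a zero eigenvalue that does not sit at the end of a chain. A cleaner choice is the decomposition $1+4k$: a $1\times1$ zero block followed by $k$ copies of $D_4$, with the last copy of $D_4$ absorbing the final column. The couplings are as before. The subdiagonal ones of the companion matrix between consecutive blocks go into $N$ as $Q_{4,1},Q_{4,4}$, and $N_4$ sits on the diagonal blocks. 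The last column is split between $N$ (blocks $M_s$) and $D$ (blocks $T_s$), exactly as in Proposition~\ref{4k}(b), with $T_0=(0\ \cdots\ u_0)$.

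The square-zero check is the same as in Proposition~\ref{4k}. $N$ is strictly lower block bidiagonal, with $N_4$ on the diagonal, plus a last block column whose nonzero entries lie in rows that are killed by $N_4$ and by the $Q$'s, because all nonzero columns of those blocks are last columns hitting rows with zero last entries. So I need $N_4M_s=Q_{4,4}M_s=0$. This forces the fourth row of each $M_s$ to be zero, which is why the entry $u_{4s}$ is routed into $T_s$.

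The main obstacle is the last block. Its diagonal part must be exactly $D_4$, while its last column must carry the trace-zero entry $0$ at the bottom together with the coefficients $u_{4k-3},u_{4k-2},u_{4k-1}$ through $M_k$. The $(4k+1,4k+1)$ entry of $D_4$ is $0$, which matches the trace-zero condition. The entries $a^2+a$ and $a^2+a+1$ of $D_4$'s last column are compensated in $M_k$, as in the definition of $M_k$ in Proposition~\ref{4k}(b), so $M_k$ has zero last row and $M_k^2=0$.

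Finally I will prove that $D$ is diagonalizable by exhibiting eigenvectors. For the blocks $D_4$ with $s<k$ I use the lifted eigenvectors of Definition~\ref{notation}, placed in the right coordinates. For the last $D_4$, whose column couples to all earlier blocks via the $T_s$, I solve $(D-\lambda)v=0$ for $\lambda\in\{0,1,a,a+1\}$. I start from the $D_4$-eigenvector in the last block and propagate upward, as in the formulas for $v_{4k-2},v_{4k-1},v_{4k}$ in Proposition~\ref{4k}(b).

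The genuine danger is $\lambda=0$. The top $1\times1$ zero block has eigenvalue $0$, and so does each $D_4$, so solving upward may require inverting $D_4-\lambda$ or the scalar $0-\lambda$ on an eigenvalue shared with an earlier block. That is exactly the case where diagonalizability can fail. I will handle it by choosing the $T_s$ so that the coupling vector has no component in the relevant eigenspace of earlier blocks: put $u_{4s}$ in rows arranged so that the $T_s$ column lies in the range of $D_4-\lambda$ for each $\lambda$. That is precisely what the pattern $((a^2+a)u_{4s},(a^2+a+1)u_{4s},0,u_{4s})^T$ achieves, since it is $u_{4s}$ times the $0$-eigenvector of $D_4$.

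If the $\lambda=0$ coupling into the top scalar block still obstructs the argument, I will fall back on making the block sizes $2+4(k-1)+3$: a $D_2$ block with eigenvalues $0,1$ in place of the scalar, followed by $D_3$ with eigenvalues $1,a,a+1$, mirroring Case (a) of Proposition~\ref{4k}.
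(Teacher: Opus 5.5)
\textbf{There is a genuine gap: your main construction does not give a diagonalizable $D$.} The reduction to trace zero is fine; the problem is the $1+4k$ shape (a $1\times1$ zero block followed by $k$ copies of $D_4$).

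Write $D_{11}=\mathrm{diag}(0,D_4,\dots,D_4)$ for the part on the first $4k-3$ coordinates. Write $t=(u_0,u_4w,\dots,u_{4k-4}w)^T$ for the coupling column, where $w=(a^2+a,a^2+a+1,0,1)^T$ spans $\ker D_4$. Every eigenvector of the last $D_4$ has nonzero last coordinate, and $D_{11}$ is diagonalizable. Hence $D$ is diagonalizable if and only if $t\in\operatorname{Im}(D_{11}-\lambda)$ for all four $\lambda\in\{0,1,a,a+1\}$.

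At $\lambda=0$ this fails on two counts. First, the first row of $D_{11}$ is zero, so you need $u_0=0$. Second, you need $u_{4s}w\in\operatorname{Im}D_4$, which forces $u_{4s}=0$, because $\ker D_4\cap\operatorname{Im}D_4=0$ for a diagonalizable matrix. So your claim that the $0$-eigenvector pattern lies in the range of $D_4-\lambda$ ``for each $\lambda$'' is backwards at $\lambda=0$: that pattern is harmless for $\lambda\in\{1,a,a+1\}$ and fatal for $\lambda=0$.

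A concrete failure: for $k=1$ and $B'$ the companion matrix of $x^5+1$, you get $\ker D=\FF e_1$, while $0$ has algebraic multiplicity $2$. Nor can you move $u_0$ into $N$: the entry $Q_{4,1}$ then gives $(N^2)_{2,4k+1}=u_0$. This is exactly why Proposition~\ref{4k}(b) ends with $D_3$, whose spectrum $\{1,a,a+1\}$ avoids $0$. In order $4k+1$ the count $1+4m+3=4k+1$ has no solution, so that trace-zero pattern does not transfer.

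\textbf{Your fallback $2+4(k-1)+3$ cannot work as stated, by a trace count.} You have $\operatorname{tr}D=\operatorname{tr}B'+\operatorname{tr}N=0$. But $D_4$ and $D_3$ have trace $0$, and a block with eigenvalues $0,1$ has trace $1$, so the block-triangular $D$ would have trace $1$.

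\textbf{The missing idea is to normalize the trace to $1$ instead of $0$.} Since $4k+1$ is odd and $\{0,1,a,a+1\}+1=\{0,1,a,a+1\}$, pass to $A=B+(c+1)\Id$. Its bottom-right companion entry is $1$, which can be absorbed by $D_2=\left(\begin{smallmatrix}0&0\\1&1\end{smallmatrix}\right)$ placed at the bottom. As in the paper, this block is preceded by $k-1$ copies of $D_4$ and one $D_3$.

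The coupling columns must then lie in the span of the $a$- and $(a+1)$-eigenvectors of the earlier blocks, so that both final eigenvectors (for $0$ and $1$) lift. The paper's choices do this: $u_{4s-1}(0,0,1,1)^T$ for each $D_4$, and $u_{4k-2}(0,1,1)^T$ for $D_3$. Indeed, $(0,0,1,1)^T$ and $(0,1,1)^T$ are each $a$ times the $a$-eigenvector plus $(a+1)$ times the $(a+1)$-eigenvector of $D_4$ and $D_3$ respectively.
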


\begin{proof} Since $4k+1$ is odd, we can freely replace $B$ by $A=B+(c+1)\Id$, which is again a non-derogative matrix which has trace equal to 1. We can assume without loss of generality that
$$A=\left(\begin{array}{cccccc}
                                                                          0 & 0 & \cdots & 0 & 0 &u_0\\
                                                                          1 & 0 & \cdots & 0 & 0 &u_1\\
                                                                          0 & 1 & \cdots & 0 & 0 &u_2\\
                                                                          \vdots & \vdots & \ddots & \vdots & \vdots &\vdots\\
                                                                          0 & 0 & \cdots & 1 & 0 &u_{4k-1}\\
                                                                          0 & 0 & \cdots & 0 & 1 &1
                                                                        \end{array}\right).$$
Let us show now that we can build a square-zero matrix $N$ such that $A=N+D$, where $D$ is the direct sum of $k-1$ diagonalizable matrices of order $4$, one diagonalizable matrix of order $3$ and one diagonalizable matrix of order $2$. To that fact, let us consider
$$M_s:=\left(
   \begin{array}{cc}
      0 &-u_{4s-4}\\
      0 &-u_{4s-3}\\
      0 &u_{4s-1}-u_{4s-2}\\
      0 &0\\
   \end{array}
 \right),\hbox{ for $s=1,\dots, k-1$,}$$
 $$M_{k}:=\left(
   \begin{array}{ccc}
      0& 0 &-u_{4k-4}\\
      0 & 0 &u_{4k-2}-u_{4k-3}\\
      0 & 0 &0\\
   \end{array}
 \right),\ M_{k+1}:=\left(
   \begin{array}{cc}
      0 &-u_{4k-1}\\
      0 &0\\
   \end{array}
 \right),$$
and put
$$N:=\left(
      \begin{array}{ccccccc}
        N_4 & 0 & \cdots &  0 & 0& 0 & M_1 \\
        Q_{4,4} & N_4 & \cdots  & 0& 0 & 0 & M_2 \\
        \vdots & \ddots & \ddots  & \vdots &\vdots & \vdots & \vdots \\
        0 & 0 & \ddots   & N_4 & 0& 0 & M_{k-1} \\
        0 & 0 & \ddots   & Q_{4,4} & N_4& 0 & M_{k-1} \\
        0 & 0 & \cdots  &0&  Q_{3,4} & N_3 & M_{k} \\
        0 & 0 & \cdots  & 0& 0 &Q_{2,3} & M_{k+1} \\
      \end{array}
    \right)\in M_{4k+1}(\mathbb{F}).$$
Since one sees that $N_4 M_s=Q_{4,4} M_s=0$ for $s=1,\dots, k-1$, $Q_{3,4}M_{k-1}=N_3M_k=Q_{2,3}M_k=0$, and $M_{k+1} M_{k+1}=0$, we infer that $N$ is a square-zero matrix. Moreover, if we consider
$$T_s:=\left(
   \begin{array}{cc}
      0 &0\\
      0 &0\\
      0 &u_{4s-1}\\
      0 &u_{4s-1}\\
   \end{array}
 \right),\hbox{ for $s=1\dots, k-1$, and} ~ T_{k}:=\left(
   \begin{array}{cc}
       0 &0\\
       0 &u_{4k-2}\\
       0 &u_{4k-2}\\
   \end{array}
 \right),
$$
we perceive that $A=N+D$, where
$$D:=\left(
         \begin{array}{ccccc}
           D_4 & \cdots & 0 & 0 & T_1 \\
           \vdots & \ddots & \vdots & \vdots & \vdots \\
           0 & \cdots & D_4 & 0 & T_{k-1} \\
           0 & \cdots & 0 & D_3 & T_k \\
           0 & \cdots & 0 & 0 & D_2 \\
         \end{array}
       \right).$$
Since $D_4$ is diagonalizable, we derive that, for any $s=1\dots, k-1$, the vectors
\begin{align*}
 v_{4s-3}&=(a^2+a)e_{4s-3}+(a^2+a+1)e_{4s-2}+e_{4s},\\
 v_{4s-2}&=(a^2+a)e_{4s-2}+e_{4s-1}+e_{4s},\\
 v_{4s-1}&=(a+1)e_{4s-2}+a e_{4s-1}+e_{4s},\\
 v_{4s}&=a e_{4s-2}+(a+1) e_{4s-1}+e_{4s}
\end{align*}
form a set of $4k-4$ independent eigenvectors of $D$ associated to the eigenvalues $0,1,a,a+1$. Moreover, since $D_{3}$ is diagonalizable,
\begin{align*}
 v_{4k-3}&=(a^2+a)e_{4s-3}+e_{4s-2}+e_{4s-1},\\
 v_{4k-2}&=(a+1)e_{4s-3}+a e_{4s-2}+e_{4s-1},\\
 v_{4k-1}&=a e_{4s-3}+(a+1)e_{4s-2}+e_{4s-1}
\end{align*}
are eigenvectors of $D$ associated to $1,a,a+1$. Likewise, the following vectors are eigenvectors of $D$ associated to $0,1$:
 \begin{align*}
 v_{4k}&=\sum_{i=1}^{k-1}  ( u_{4i-1} e_{4i-2}+ u_{4i-1} e_{4i-1})+ u_{4k-2} e_{4k-3}+ u_{4k-2} e_{4k-2}+ e_{4k}+  e_{4k+1},\\
 v_{4k+1}&=\sum_{i=1}^{k-1} ((a^2+a) u_{4i-1} e_{4i-2}+ u_{4i-1} e_{4i})+ (a^2+a) u_{4k-2} e_{4k-3}+ u_{4k-2} e_{4k-1}\\&+   e_{4k+1}.\\
\end{align*}
That is why $\{v_1,\dots, v_{4k+1}\}$ form a basis of eigenvectors of $D$, and hence it is diagonalizable. Observe additionally that, due to the structure of the matrix $D$, it is plainly observed that $v_{4k}, v_{4k+1}$ are eigenvectors associated to $0,1$ within a $9\times 9$ matrix
$$D':=\left(
  \begin{array}{ccc}
    D_4&0 & T_1 \\
    0 & D_3&T_2\\
    0&0& D_2 \\
  \end{array}
\right).
$$
We thus have expressed $A$ as the sum of a square-zero matrix and a diagonalizable matrix with eigenvalues $0,1,a,a+1$, so that $B=A+(c+1)\Id$ can also be expressed as the sum of a square-zero matrix and a diagonalizable matrix with eigenvalues $c+1,c,a+c+1,a+c$, as desired.
\end{proof}

As a new consequence, we find:

\begin{corollary}\label{4k+1-corollary}
If $\FF=\FF_2$ and $B$ is a non-derogatory matrix of order $4k+1$ with $k\ge 1$ over $\FF$, then $B$ can be expressed as the sum of a square-zero matrix $N$ and a potent matrix $D$ with $D^4=D$.
\end{corollary}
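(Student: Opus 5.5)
I will follow the pattern of Corollaries \ref{case3-corollary}, \ref{case4-corollary} and \ref{4k-corollary}: embed $M_{4k+1}(\FF_2)\subseteq M_{4k+1}(\FF_4)$, choose $a\in\FF_4$ with $a^2+a+1=0$ (so $a\ne 0,1$), and apply the explicit construction in the proof of Proposition \ref{4k+1}. Let $c=\text{Trace}(B)\in\FF_2$.

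First I will reduce to trace $1$. If $c=0$, I replace $B$ by $A=B+\Id$. This matrix is still non-derogatory over $\FF_2$, has trace $1$ because the order $4k+1$ is odd, and $c+1=1$ lies in $\FF_2$. If $c=1$, I take $A=B$. I will then assume $A$ is in companion form with last column $(u_0,\dots,u_{4k-1},1)^T$ and $u_i\in\FF_2$, and write $A=N+D$ with the matrices $N$ and $D$ from the proof of Proposition \ref{4k+1}, using the above $a$.

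The key step is to check that $N$ has entries in $\FF_2$. Since $a^2+a=1$ and $a^2+a+1=0$, the blocks $N_3$ and $N_4$ reduce to $\FF_2$-matrices, exactly as computed in Corollary \ref{4k-corollary}. The blocks $M_s$, $M_k$, $M_{k+1}$ and $Q_{i,j}$ involve only $\pm u_i$ and $0,1$, so they are over $\FF_2$ as well. Hence $D=A+N\in M_{4k+1}(\FF_2)$. By Proposition \ref{4k+1} (with trace $1$, so that $c+1=0$), $D$ is diagonalizable over $\FF_4$ with eigenvalues among $1,0,a+1,a$. All of these are roots of $x^4-x$, which gives $D^4=D$ and $N^2=0$.

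Finally I will undo the shift. If $c=1$ we are done. If $c=0$, then $B=N+(D+\Id)$, and $D+\Id$ is diagonalizable over $\FF_4$ with eigenvalues $1,0,a,a+1$, again all in $\FF_4$. So $(D+\Id)^4=D+\Id$, as in the last step of Case (a) of Corollary \ref{case4-corollary}. The only real obstacle is the bookkeeping that every block of $N$ in Proposition \ref{4k+1} is defined over $\FF_2$ once $a^2+a=1$. Nothing else is new.
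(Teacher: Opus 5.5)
Your proposal is correct and follows the paper's proof. You view $B$ over $\FF_4$ with $a^2+a+1=0$, run the explicit construction from the proof of Proposition \ref{4k+1}, note that $N_3$, $N_4$ and all the $M_i$, $Q_{i,j}$ have entries in $\FF_2$ once $a^2+a=1$ (so $D=A+N$ is over $\FF_2$), and conclude $D^4=D$ from diagonalizability over $\FF_4$ with eigenvalues in $\{0,1,a,a+1\}$; your explicit handling of the shift $B\mapsto B+(c+1)\Id$ simply spells out a step the paper leaves implicit.
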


\begin{proof}
The claim follows from Proposition \ref{4k+1} by considering $B$ as a matrix over $\FF_4$ and taking $a\in \FF_4$ such that $a^2+a+1=0$; notice also that $N$ and $D$ are matrices over $\FF_2$ by imitating the same arguments as in the proof of Case (a) of Corollary \ref{4k-corollary}. Moreover, one knows that $D^4=D$, because it is a diagonalizable matrix over $\FF_4$ with eigenvalues $0,1,a,a+1$.
\end{proof}

Now, we study non-derogative matrices of order $4k+2$, where $k\ge 1$.

\begin{proposition}\label{4k+2}
Let $\FF$ be a field of characteristic 2 with $\#\FF\ge4$, and let $B$ be a non-derogative matrix over $\FF$ of order $4k+2$ with $k\ge 1$. Then, for every $a\in \FF$, $a\ne 0,1$:
\begin{itemize}
\item[(a)] If $\text{Trace}(B)=c\ne 0$, there exists a square-zero matrix $N\in {M}_{4k+2}(\FF)$ such that $B=N+D$, where $D$ is diagonalizable with eigenvalues $0, c, ca,c(a+1)$.
\item[(b)] If $\text{Trace}(B)=0$ for $b\in \mathbb{F}$ such that $b^2 =w_{4k}+a^2+a+1$, where $w_{4k}$ is the coefficient of degree $4k$ of the characteristic polynomial of $B$, there exists a square-zero matrix $N\in {M}_{4k+2}(\FF)$ such that $B=N+D$, where $D$ is diagonalizable with eigenvalues $b,b+1,b+a$ and $b+a+1$.
\end{itemize}
\end{proposition}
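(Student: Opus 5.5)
We follow the template of Propositions~\ref{4k} and \ref{4k+1}: write the companion matrix as $N+D$ with $N$ a block matrix built from $N_4,N_3,Q_{r,s}$ and a last block column $M_s$, and with $D$ block upper triangular, having diagonal blocks among $D_4,D_3,D_2$ and a last block column $T_s$. We then exhibit an explicit eigenbasis of $D$.

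\textbf{Case (a).} Replace $B$ by $A=c^{-1}B$, so that $A$ is non-derogative of order $4k+2$ and trace $1$. Take $A$ to be the companion matrix with last column $(u_0,\dots,u_{4k},1)^T$. The order $4k+2$ splits as $(k-1)\cdot 4+2\cdot 3+2$, i.e.\ as in Case (a) of Proposition~\ref{4k} with one fewer $D_4$ block. Equivalently, it is Proposition~\ref{4k+1} with an extra $D_3$ block.

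We therefore take $D$ with diagonal blocks $D_4$ ($k-1$ times), $D_3,D_3,D_2$. The last column of $D$ carries blocks $T_s$ of the same shape as in Proposition~\ref{4k}: for each $D_4$ block the entry $u_{4s-1}$ sits in its last two rows, and for each $D_3$ block the relevant coefficient sits in its last two rows. The subdiagonal $1$'s linking consecutive blocks are moved into $N$ via the $Q_{4,4},Q_{3,4},Q_{3,3},Q_{2,3}$ entries. The remaining entries of the last column go into the $M_s$, whose last row vanishes so that products like $Q\,M_s$ and $N_j M_s$ are zero.

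Square-zeroness of $N$ is checked exactly as before: every $M_s$ and every $N_j$ has zero last row, and every $Q$ has its only nonzero entry in the first row, last column. The eigenvalues $0,1,a,a+1$ come from the $D_4$ and $D_3$ blocks. The two remaining eigenvectors, for eigenvalues $0$ and $1$, are obtained by mimicking $v_{4k-1},v_{4k}$ of Proposition~\ref{4k}. One needs $(T_s)$ to lie in the image of $D_j-\lambda$ for $\lambda\in\{0,1\}$, which the chosen shape of $T_s$ (equal entries in the last two rows) guarantees. Rescaling by $c$ gives eigenvalues $0,c,ca,c(a+1)$.

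\textbf{Case (b).} Here the trace is zero, so we shift by a scalar. The order is even, so adding $b\Id$ does not change the trace. It does change the coefficient of $x^{4k}$: in characteristic 2, the coefficient $w_{4k}$ of $\chi_{A+b\Id}(x)=\chi_A(x+b)$ becomes $w_{4k}+\binom{4k+2}{2}b^2=w_{4k}+b^2$, since $\binom{4k+2}{2}=(2k+1)(4k+1)$ is odd. With $b^2=w_{4k}+a^2+a+1$, which is solvable because squaring is bijective on a finite field of characteristic 2, the matrix $A=B+b\Id$ is non-derogative with trace $0$ and $x^{4k}$-coefficient $a^2+a+1$.

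We then decompose $A$ with $D$ having eigenvalues $0,1,a,a+1$ and conclude $B=N+(D+b\Id)$. For $A$ we imitate Case (b) of Proposition~\ref{4k}: blocks $D_4$ ($k$ times) together with a $D_2$-type block of order $2$ with trace $0$, which would not be diagonalizable in isolation. The alternative is a first zero $1\times1$ block, $k-1$ blocks $D_4$, and then one $D_3$ and a $2$-block, which is cleaner. In either arrangement the prescribed coefficient $u_{4k}=a^2+a+1$ is exactly what makes the final $D_3$ block, with entries $a^2+a$ and $a^2+a+1$ in its last column, fit with zero correction in the $M$ block. Equivalently, it is what makes the last $2\times 2$ piece of $N$ square-zero, since $M_{k}M_{k}=0$ forces the corner entry to vanish.

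\textbf{Main obstacle.} The main obstacle is bookkeeping in Case (b): the block pattern and the $T_s$ must be chosen so that $N^2=0$ and the tail eigenvectors exist. This is precisely where the condition on $b$ is used. I would first fix the pattern $1+4(k-1)+3+2$ and solve for the entries, checking diagonalizability on a $7\times7$ or $9\times9$ reduced matrix $D'$, as the paper does.
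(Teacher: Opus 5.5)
\textbf{Case (a).} Your block pattern has the wrong size: $4(k-1)+2\cdot 3+2=4k+4$, not $4k+2$. Proposition~\ref{4k+1} with an extra $D_3$ block also has order $4k+4$. Since $D_4$ and $D_3$ have trace $0$ while $\text{Trace}(A)=1$, a $D_2$ block is forced. Then $4m+3j=4k$ shows that no pattern of $D_4$'s, $D_3$'s and one $D_2$ contains exactly two $D_3$'s. The paper takes $k$ copies of $D_4$ followed by $D_2$. With that pattern your tail $u_{4s-1}(e_3+e_4)$ would actually work, because $e_3+e_4=D_4(e_2+e_3)=(D_4+\Id)e_3$; the paper uses $u_{4s-1}(e_2+e_4)$ instead. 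So this case is repairable, but the construction you state does not exist.

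\textbf{Case (b).} Here there is a genuine gap. Your computation of how the shift changes the coefficient of $x^{4k}$ is correct, and more explicit than the paper's. But the pattern you commit to, $1+4(k-1)+3+2$ with a zero $1\times1$ block, cannot work.

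In a block upper triangular $D$, the last $2\times2$ diagonal block must be diagonalizable, since its minimal polynomial divides that of $D$. It must also have trace $0$, since every other diagonal block does. In characteristic $2$ this forces it to be $\lambda I_2$ with $\lambda\in\{0,1,a,a+1\}$. The corner of $N$ is then $\begin{pmatrix}\lambda & u_{4k}\\ 1 & \lambda\end{pmatrix}$, and in your template row $4k+2$ of $N$ has no other nonzero entries. Hence $(N^2)_{4k+2,4k+2}=u_{4k}+\lambda^2=a^2+a+1+\lambda^2$, which is nonzero unless $a^2+a+1=0$.

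Your account of where $u_{4k}=a^2+a+1$ enters also refers to a final $D_3$ block that this pattern does not have. Nor is that condition what makes the corner of $N$ square-zero: a block supported in its last column with zero last row is square-zero regardless.

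The missing idea is the paper's pattern $4(k-1)+3+3$: $k-1$ copies of $D_4$ followed by two copies of $D_3$, all of trace $0$, with no $2\times2$ block. Both $D_3$ blocks have eigenvalues $1,a,a+1$, and every eigenvector $y$ of the last one has $y_3=1$. So a one-column tail over the penultimate $D_3$ would have to lie in $\bigcap_\lambda \mathrm{range}(D_3+\lambda\Id)=0$, which would force $u_{4k-2}=0$.

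The paper instead uses two-column tails: $T_s$ carries $u_{4s-1}$ in positions $(3,2)$ and $(4,3)$, and analogously for the $D_3$ block. Then for an eigenvector $y$ of the last $D_3$ with eigenvalue $\lambda$ (so $y_2=\lambda$, $y_3=1$), one gets $T_s y=u_{4s-1}(\lambda e_3+e_4)=(D_4+\lambda\Id)(u_{4s-1}e_3)$ for all three $\lambda$ at once.

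The compensating entries $u_{4s-1}$ this places in column $4k+1$ of $N$ make that column nonzero. Therefore $N^2=0$ forces the $(4k+1,4k+2)$ entry of $N$, namely $u_{4k}+a^2+a+1$, to vanish. That is exactly where the choice $b^2=w_{4k}+a^2+a+1$ is used.
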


\begin{proof}
{\bf Case (a):} Suppose that $\text{Trace}(B)=c\ne 0$. Let us consider the non-derogative matrix $A=c^{-1} B$, which has trace equal to one. Suppose also, without loss of generality, that $A$ has the form
$$A=\left(\begin{array}{cccccc}
                                                                          0 & 0 & \cdots & 0 & 0 &u_0\\
                                                                          1 & 0 & \cdots & 0 & 0 &u_1\\
                                                                          0 & 1 & \cdots & 0 & 0 &u_2\\
                                                                          \vdots & \vdots & \ddots & \vdots & \vdots &\vdots\\
                                                                          0 & 0 & \cdots & 1 & 0 &u_{4k}\\
                                                                          0 & 0 & \cdots & 0 & 1 &1
                                                                        \end{array}\right).$$

Let us show now that we can build a square-zero matrix $N$ such that $A=N+D$, where $D$ is direct sum of $k$ diagonalizable matrices of order $4$ and one diagonalizable matrix of order $2$. To that end, let us consider
$$M_s:=\left(
   \begin{array}{cc}
      0 &-u_{4s-4}\\
      0 &  u_{4s-1}-u_{4s-3}\\
      0 & -u_{4s-2}\\
      0 &0\\
   \end{array}
 \right),\hbox{ for $s=1,\dots, k$, and} ~ M_{k+1}:=\left(
   \begin{array}{cc}
      0 &-u_{4k}\\
      0 &0\\
   \end{array}
 \right),$$
and put
$$N:=\left(
      \begin{array}{cccccc}
        N_4 & 0 & 0 &  0 & 0 & M_1 \\
        Q_{4,4} & N_4 & 0  & 0 & 0 & M_2 \\
        0 & Q_{4,4} & N_4  & 0 & 0 & M_3 \\
        \vdots & \vdots &  \ddots & \ddots & \vdots & \vdots \\
        0 & 0 & 0  & Q_{4,4} & N_4 & M_{k} \\
        0 & 0 & 0  & 0 & Q_{2,4} & M_{k+1} \\
      \end{array}
    \right).$$
Since  $N_4 M_s=Q_{4,4} M_s=0$, for $s=1,\dots, k-1$, $N_4M_k=0$, $Q_{2,4} M_k=0$, and $M_{k+1} M_{k+1}=0$, we arrive at the fact that $N$ is a square-zero matrix. Moreover, if we consider
$$T_s:=\left(
    \begin{array}{ccc}
       0 & 0 & 0 \\
       0 & 0 &  u_{4s-1} \\
       0 & 0 & 0   \\
       0 & 0 & u_{4s-1} \\
    \end{array}
  \right),\hbox{ for } s=1\dots, k-1,$$
we obtain $A=N+D$, where
$$D:=\left(
  \begin{array}{ccccc}
    D_4 & 0 & \cdots  & 0 & T_1 \\
    0 & D_4 & \cdots   & 0 & T_2 \\
    \vdots & \vdots & \ddots & \vdots &  \vdots \\
    0 & 0 & \cdots   & D_4 & T_k \\
    0 & 0 & \cdots   & 0 & D_2 \\
  \end{array}
\right).$$
Since $D_4$ is diagonalizable, we reach that, for $s=1\dots, k$ the vectors
\begin{align*}
 v_{4s-3}&=(a^2+a)e_{4s-3}+(a^2+a+1)e_{4s-2}+e_{4s},\\
 v_{4s-2}&=(a^2+a)e_{4s-2}+e_{4s-1}+e_{4s},\\
 v_{4s-1}&=(a+1)e_{4s-2}+a e_{4s-1}+e_{4s},\\
 v_{4s}&=a e_{4s-2}+(a+1) e_{4s-1}+e_{4s}
\end{align*}
form a set of $4k$ independent eigenvectors of $D$ associated to the eigenvalues $0,1,a,a+1$, and the following vectors are eigenvectors of $D$ associated to the eigenvalues $0,1$:
\begin{align*}
  v_{4k+1}&=\sum_{i=1}^{k} (  u_{4i-1} e_{4i-3}+ u_{4i-1} e_{4i-1})+ e_{4k+1}+  e_{4k+2},\\
  v_{4k+2}&=\sum_{i=1}^{k}  ( u_{4i-1} e_{4i-2}+ u_{4i-1} e_{4i-1})+  e_{4k+2}.
\end{align*}

Consequently, we extract that $\{v_1,\dots, v_{2k+2}\}$ is a basis of eigenvectors of $D$, and hence it is diagonalizable. Observe in addition that, due to the structure of the matrix
$D$, one can check that $v_{4k+1}, v_{4k+2}$ are eigenvectors associated to the eigenvalues $0,1$ within the $6\times 6$ matrix
$$D':=\left(
  \begin{array}{cc}
    D_4 & T_1 \\
    0 & D_2 \\
  \end{array}
\right).
$$

We thus have expressed $A$ as the sum of a square-zero matrix and a diagonalizable matrix with eigenvalues $0,1,a,a+1$, so that $B=cA$ can also be expressed as the sum of a square-zero matrix and a diagonalizable matrix with eigenvalues $0,c,ca, c(a+1)$.

\medskip

{\bf Case (b):} Suppose that $\text{Trace}(B)= 0$ and let $$p_B(x)=x^{4k+2}+w_{4k}x^{4k}+\dots$$ be the characteristic polynomial of $B$. Since the map $\phi: \FF\to \FF$ defined by $\phi(x)=x^2$ is a one-to-one $\mathbb{F}_2$-linear map, it is surjective and so we can choose $b\in \FF$ such that $b^2 =w_{4k}+a^2+a+1$. Let us work with the non-derogative matrix $A=B+b\Id$ whose characteristic polynomial is of the form $$p_{A}(x)=x^{4k+2}+ u_{4k+1} \, x^{4k+1}+u_{4k}\, x^{4k}+\cdots ,$$
$$\hbox{where} ~ u_{4k+1}=0 ~ \hbox{and} ~ u_{4k}=w_{4k}+b^2=a^2+a+1.$$
We also can suppose without loss of generality that $A$ is of the form
 $$A=\left(\begin{array}{cccccc}
                                                                          0 & 0 & \cdots & 0 & 0 &u_0\\
                                                                          1 & 0 & \cdots & 0 & 0 &u_1\\
                                                                          \vdots & \ddots & \ddots & \vdots & \vdots &\vdots\\
                                                                          0 & 0 & \ddots & 0 & 0 &u_{4k-1}\\
                                                                          0 & 0 & \cdots & 1 & 0 &a^2+a+1\\
                                                                          0 & 0 & \cdots & 0 & 1 &0
                                                                        \end{array}\right).$$

Let us prove now that we can build a square-zero matrix $N$ such that $A=N+D$, where $D$ is the direct sum of $k-1$ diagonalizable matrices of order $4$ and two diagonalizable matrices of order $3$. To that end, let us consider
$$M_s:=\left(
   \begin{array}{ccc}
     0 & 0 &-u_{4s-4}\\
     0 & 0 &-u_{4s-3}\\
     0 & u_{4s-1} &-u_{4s-2}\\
     0 & 0 &0\\
   \end{array}
 \right),\hbox{ for $s=1,\dots, k-1$,}
$$
$$M_k:=\left(
   \begin{array}{ccc}
     0 & 0 &-u_{4k-4}\\
     0 & u_{4k-2} &-u_{4k-3}\\
     0 & 0 &0\\
   \end{array}
 \right),\ M_{k+1}:=\left(
   \begin{array}{ccc}
     0 & 0 &-u_{4k-1}+a^2+a\\
     0 & 0 &0\\
     0 & 0 &0\\
   \end{array}
 \right),$$
and put
$$N:=\left(
      \begin{array}{cccccccc}
        N_4 & 0 & 0&\cdots & 0 & 0 & 0 & M_1 \\
        Q_{4,4} & N_4 &0& \cdots & 0 & 0 & 0 & M_2 \\
        0 & Q_{4,4} & N_4&\ddots & 0 & 0 & 0 & M_3 \\
        \vdots & \vdots& \ddots & \ddots & \vdots &\vdots & \vdots & \vdots \\
        0 & 0& 0 & \ddots & N_4 & 0 & 0 & M_{k-2} \\
        0 & 0& 0 & \cdots & Q_{4,4} & N_4 & 0 & M_{k-1} \\
        0 & 0& 0 & \cdots & 0 & Q_{3,4} & N_3 & M_{k} \\
        0 & 0& 0 & \cdots & 0 & 0 & Q_{2,3} & M_{k+1} \\
      \end{array}
    \right).$$
Since  $N_4 M_s=Q_{4,4} M_s=0$ for $s=1,\dots, k-2$, $N_4M_k=Q_{3,4}M_k=0$, $N_3M_k=Q_{2,3}M_k=0$, and $M_{k+1} M_{k+1}=0$, we detect that $N$ is a square-zero matrix. Moreover, if we consider
$$T_s:=\left(
    \begin{array}{ccc}
       0 & 0 & 0 \\
       0 & 0 & 0 \\
       0 & u_{4s-1} &  0\\
       0 & 0 & u_{4s-1} \\
    \end{array}
  \right),\hbox{ for $s=1\dots, k-1$,}\  T_k:=\left(
             \begin{array}{ccc}
               0 & 0 & 0 \\
               0 & u_{4k-4} & 0 \\
               0 & 0 & u_{4k-4} \\
             \end{array}
           \right),$$
we discover that
$$D:=A+N=\left(
  \begin{array}{cccccc}
    D_4 & 0 & \cdots & 0 & 0 & T_1 \\
    0 & D_4 & \cdots  & 0 & 0 & T_2 \\
    \vdots & \vdots & \ddots & \vdots & \vdots& \vdots \\
    0&0 & \cdots  & D_4 &0&T_{k-1} \\
    0 & 0 & \cdots  & 0 & D_3 & T_k \\
    0 & 0 & \cdots  & 0 & 0 & D_3 \\
  \end{array}
\right)\in M_{4k+2}(\mathbb{F}).$$
Furthermore, since $D_4$ is diagonalizable, we establish that, for $s=1\dots, k-1$ the vectors
\begin{align*}
 v_{4s-3}&=(a^2+a)e_{4s-3}+(a^2+a+1)e_{4s-2}+e_{4s},\\
 v_{4s-2}&=(a^2+a)e_{4s-2}+e_{4s-1}+e_{4s},\\
 v_{4s-1}&=(a+1)e_{4s-2}+a e_{4s-1}+e_{4s},\\
 v_{4s}&=a e_{4s-2}+(a+1) e_{4s-1}+e_{4s}
\end{align*}
form a set of $4k-4$ independent eigenvectors of $D$ associated to the eigenvalues $0,1,a,a+1$. Moreover, since $D_3$ is diagonalizable,
\begin{align*}
 v_{4k-3}&=(a^2+a)e_{4k-3}+e_{4k-2}+e_{4k-1},\\
 v_{4k-2}&=(a+1)e_{4k-3}+a e_{4k-2}+e_{4k-1},\\
 v_{4k-1}&=a e_{4k-3}+(a+1)e_{4k-2}+e_{4k-1}
\end{align*}
are three independent vectors which are eigenvectors of $D$ associated to the eigenvalues $1,a,a+1$, and the following vectors are eigenvectors of $D$ associated to the eigenvalues $1,a,a+1$:
\begin{align*}
  v_{4k}&=\sum_{i=1}^{k-1} ( (a^2+a)u_{4i-1} e_{4i-2}+u_{4i-1} e_{4i})+(a^2+a)u_{4k-2} e_{4k-3}+ u_{4k-2} e_{4k-1}   \\&+(a^2+a)e_{4k}+e_{4k+1}+ e_{4k+2},\\
  v_{4k+1}&=\sum_{i=1}^{k-1} ( (a+1)u_{4i-1}e_{4i-2}+ u_{4i-1} e_{4i})+(a+1)u_{4k-2} e_{4k-3} +u_{4k-2} e_{4k-1}\\&  +(a^2+a)e_{4k}+a^2e_{4k+1}+a e_{4k+2},\\
  v_{4k+2}&=\sum_{i=1}^{k-1} ( a u_{4i-1}e_{4i-2}+ u_{4i-1} e_{4i})+a u_{4k-2} e_{4k-3} +u_{4k-2} e_{4k-1}\\&+(a^2+a)e_{4k}+(a^2+1) e_{4k+1}+(a+1) e_{4k+2}.
\end{align*}
Therefore, $\{v_1,\dots, v_{2k+2}\}$ is a basis of eigenvectors of $D$, and hence $D$ is diagonalizable. Observe additionally that, due to the structure of the matrix
$D$, one sees that $v_{4k}, v_{4k+1}$ and $ v_{4k+2}$ are eigenvectors associated to the eigenvalues $1,a,a+1$ within the $7\times 7$ matrix
$$D':=\left(
  \begin{array}{cc}
    D_4 & T_1 \\
    0 & D_3 \\
  \end{array}
\right).
$$

We thus have shown that $A$ can be expressed as the sum of a square-zero matrix and a diagonalizable matrix with eigenvalues $0,1,a, a+1$. Hence, $B=A+b \Id$ can also be expressed as the sum of a square-zero matrix and a diagonalizable matrix with eigenvalues $b,1+b,a+b, a+1+b$, where $b^2 =w_{4k}+a^2+a+1$ and $w_{4k}$ is the coefficient of degree $4k$ of the characteristic polynomial of $B$, as wanted.
\end{proof}

Our common consequence is the following.

\begin{corollary}\label{4k+2-corollary}
If $\FF=\FF_2$ and $B$ is a non-derogatory matrix of order $4k+2$ with $k\ge 1$ over $\FF$, then $B$ can be expressed as the sum of a square-zero matrix $N$ and a potent matrix $D$ with $D^4=D$.
\end{corollary}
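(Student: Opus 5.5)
The plan is to follow the pattern of Corollaries~\ref{4k-corollary} and \ref{4k+1-corollary}. We regard $B\in M_{4k+2}(\FF_2)\subseteq M_{4k+2}(\FF_4)$ and fix $a\in\FF_4$ with $a^2+a+1=0$, so that $a\neq 0,1$ and $a^2+a=1$. We then apply Proposition~\ref{4k+2} over $\FF_4$ with this $a$, and check two things: that the resulting $N$ and $D$ actually have entries in $\FF_2$, and that the eigenvalues of $D$ all lie in $\FF_4$. The second point suffices for the conclusion. A matrix that is diagonalizable over $\FF_4$ with eigenvalues in $\FF_4$ satisfies $D^4=D$, since $x^4=x$ for every $x\in\FF_4$.

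\textbf{Case (a): $\text{Trace}(B)=1$.} Here $c=1$ and $A=B$, so Proposition~\ref{4k+2}(a) gives $B=N+D$ with $N^2=0$ and $D$ diagonalizable with eigenvalues $0,1,a,a+1$. The blocks making up $N$ are the following.
\begin{itemize}
\item The blocks $N_4$ have entries $a^2+a=1$ and $a^2+a+1=0$, so they lie in $M_4(\FF_2)$.
\item The blocks $Q_{4,4}$ and $Q_{2,4}$ have entries in $\FF_2$.
\item The blocks $M_s$ and $M_{k+1}$ have entries $\pm u_j$ or $u_j-u_{j'}$, with all $u_j\in\FF_2$.
\end{itemize}
Hence $N\in M_{4k+2}(\FF_2)$, and therefore $D=B+N\in M_{4k+2}(\FF_2)$. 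We conclude $D^4=D$.

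\textbf{Case (b): $\text{Trace}(B)=0$.} Proposition~\ref{4k+2}(b) requires $b$ with $b^2=w_{4k}+a^2+a+1=w_{4k}\in\FF_2$. We may take $b=w_{4k}\in\FF_2$, since squaring is the identity on $\FF_2$.

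Then $A=B+b\,\Id$ is still a matrix over $\FF_2$. Its companion form has $u_{4k}=a^2+a+1=0$, with all remaining $u_j\in\FF_2$.

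Running the construction of Case (b), the blocks of $N$ are the following.
\begin{itemize}
\item The blocks $N_4$ and $N_3$ reduce to $\FF_2$-matrices, exactly as in Corollary~\ref{4k-corollary}.
\item The blocks $Q_{i,j}$ lie over $\FF_2$.
\item The blocks $M_s$ ($s\le k$) involve only the $u_j$.
\item The block $M_{k+1}$ has the single nonzero entry $-u_{4k-1}+a^2+a=u_{4k-1}+1\in\FF_2$.
\end{itemize}
Thus $N\in M_{4k+2}(\FF_2)$, and so $D_A:=A+N$ is over $\FF_2$, diagonalizable over $\FF_4$ with eigenvalues $0,1,a,a+1$.

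Hence $B=N+(D_A+b\,\Id)$. The matrix $D:=D_A+b\,\Id$ is over $\FF_2$ and diagonalizable over $\FF_4$ with eigenvalues $b,b+1,b+a,b+a+1$. Since $b\in\FF_2$, these eigenvalues form the set $\{0,1,a,a+1\}=\FF_4$. Therefore $D^4=D$.

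The step I expect to need the most care is the bookkeeping that every entry of $N$, and of the blocks $T_s$ in $D$, is genuinely in $\FF_2$. In particular, $a$ must enter only through the combinations $a^2+a$ and $a^2+a+1$. The point to check in Case (b) is that $b$ can be chosen in $\FF_2$ rather than merely in $\FF_4$. This works because $a^2+a+1$ vanishes for our choice of $a$, which makes $b^2=w_{4k}\in\FF_2$.
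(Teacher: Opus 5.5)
Your proposal is correct and follows the paper's proof. Like the paper, you regard $B$ over $\FF_4$ with $a^2+a+1=0$ and apply Proposition~\ref{4k+2}, taking $b=w_{4k}\in\FF_2$ in the trace-zero case; you then observe that $N$, and hence $D=B+N$, is over $\FF_2$, with the eigenvalues of $D$ forming $\FF_4$, so $D^4=D$. Your block-by-block check, notably that $M_{k+1}$ has the single entry $u_{4k-1}+1$, spells out what the paper only covers by pointing to the argument of Corollary~\ref{4k-corollary}.
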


\begin{proof} We can consider $B$ as a matrix over $\FF_4$, and take $a\in \FF_4$ such that $a^2+a+1=0$.

If $\text{Trace}(B)=1$, then Proposition \ref{4k+2} Case (a) shows that $B=N+D$, where $N^2=0$ and $D$ is diagonalizable over $\FF_4$ with eigenvalues $0, 1, a, a+1$, so $D^4=D$. Notice that both $N$ and $D$ are matrices over $\FF_2$ via the same arguments as in the proof of Case (a) of Corollary \ref{4k-corollary}.

If now $\text{Trace}(B)=0$, then with the aid of Proposition \ref{4k+2} Case (b) we can take $b =w_{4k}$, where $w_{4k}$ is the coefficient of degree $4k$ of the characteristic polynomial of $B$; then, $B=N+D$ where $N^2=0$ and $D$ is diagonalizable over $\FF_4$ with eigenvalues $b,b+1,b+a,b+a+1$ and hence $D^4=D$. Notice that both $N$ and $D$ are matrices over $\FF_2$ by the same arguments as in the proof of Case (a) of Corollary \ref{4k-corollary}.
\end{proof}

Finally, let us deal with non-derogative matrices of $4k+3$, where $k\ge 1$.

\begin{proposition}\label{4k+3}
Let $\FF$ be a field of characteristic 2 with $\#\FF\ge4$, let $B$ be a non-derogative matrix over $\FF$ of order $4k+3$ with $k\ge 1$, and let $c=\text{Trace}(B)$. Then, for every $a\in \FF$, $a\ne 0,1$, there exists a square-zero matrix $N$ such that $B=N+D$, where $D$ is diagonalizable with eigenvalues $c,c+1,c+a,c+a+1$.
\end{proposition}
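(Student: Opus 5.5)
Since $4k+3$ is odd, I would shift as in Proposition~\ref{4k+1}. I replace $B$ by $A=B+(c+1)\Id$, whose trace is $c+(4k+3)(c+1)=c+c+1=1$. The matrix $A$ is again non-derogative, so up to similarity it is the companion matrix with last column $(u_0,\dots,u_{4k+1},1)^T$. It then suffices to write $A=N+D$ with $N^2=0$ and $D$ diagonalizable with eigenvalues $0,1,a,a+1$. Adding back $(c+1)\Id$ turns these eigenvalues into $c+1,c,c+a+1,c+a$, as claimed.

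For the decomposition of $A$ I would mimic the block pattern of Propositions~\ref{4k}(a) and \ref{4k+1}. The order is $4k+3=4(k-1)+3+2+2$, or alternatively $4k+3=4(k-1)+3+3+1$. The first choice parallels Proposition~\ref{4k+1}, with one more $2$-block. The cleaner choice, I think, is $4k+3=4(k-1)+3+3+1$: $k-1$ blocks $D_4$, two blocks $D_3$ and a final $1\times 1$ block equal to $(1)$. This final block absorbs the trace-$1$ entry, since $\text{Trace}(D_4)=\text{Trace}(D_3)=0$.

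Concretely, I would define $N$ block lower bidiagonal, with diagonal blocks $N_4,\dots,N_4,N_3,N_3,0$ and subdiagonal connectors $Q_{4,4},\dots,Q_{3,4},Q_{3,3},Q_{1,3}$. These connectors cancel the subdiagonal $1$'s of $A$ joining consecutive blocks. I would also put a last block column $M_1,\dots,M_{k+2}$. Each $M_s$ has only last-column entries, chosen as $-u_j$ or $u_j-u_{j'}$ combinations in the rows corresponding to the first two or three positions of each block. This makes $D=A+N$ block upper triangular, with diagonal blocks $D_4,\dots,D_4,D_3,D_3,(1)$ and off-diagonal last-column blocks $T_s$ of the shape used before, with entries $u_{4s-1}$ placed in two rows.

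The routine verifications are that $N^2=0$ and that the eigenvalue count is right. For $N^2=0$ one needs $N_4M_s=Q_{4,4}M_s=N_3M_k=Q_{3,3}M_k=Q_{1,3}M_{k+1}=0$ and $M_{k+2}^2=0$, which holds because every $M_s$ has zero last row, as does every $N_4$ and $N_3$. The first $4k+2$ eigenvectors come from the diagonalizability of $D_4$ and $D_3$ in Definition~\ref{notation}. The final eigenvector $v_{4k+3}$, for the eigenvalue $1$ of the last block, must be built by back-substitution, as the vectors $v_{4k-1},v_{4k}$ were in Proposition~\ref{4k}.

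The main obstacle is choosing the $T_s$, and hence the entries of the $M_s$, so that this back-substitution succeeds. The difficulty is that solving $(D-\Id)v=0$ above the last block requires $D_4-\Id$ and $D_3-\Id$ to be solvable against the vector $T_s e_{\rm last}$. These matrices are singular, since $1$ is an eigenvalue of both $D_4$ and $D_3$, so each $T_s$ column must lie in the image of $D_4-\Id$ (respectively $D_3-\Id$). I would first try $T_s$ columns of the form $u(\,0,0,1,1)^T$ and $u(0,1,1)^T$, which worked earlier for eigenvalues $0,1$. If they fail to lie in these images, I would fall back on the $4(k-1)+3+2+2$ split with $D_2$ blocks, where the eigenvalue-$0,1$ eigenvectors are already known to exist from Proposition~\ref{4k+1}, and check the analogous $9\times9$ or $11\times 11$ sub-block computation directly.
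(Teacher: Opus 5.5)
Your framework (shift, companion form, $D$ block upper triangular and coupled only through its last column, $N$ block lower bidiagonal plus a last block column) is the paper's. However, neither partition you propose works, and the step you call the main obstacle is left open.

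\textbf{The split $4(k-1)+3+3+1$.} Your list of products for $N^2=0$ omits (last block column)$\cdot$(last block row). Since the final block has order $1$, the connector $Q_{1,3}$ sits in row $n=4k+3$, i.e.\ $N_{n,n-1}=1$. Columns $n-3,n-2$ of your $N$ are zero, so $(N^2)_{i,n-1}=N_{i,n}$ for all $i$, and $N^2=0$ forces every $M_s=0$. In the paper this product vanishes because $Q_{r,s}$ enters the \emph{first} row of a final block of order $\ge 2$, never row $n$. With $M_s=0$ the coupling columns are the raw coefficients $(u_{4s-4},\dots,u_{4s-1})^T$. Since $\mathrm{Im}(D_4-\Id)=\{z:\sum z_i=0\}$, and likewise for $D_3$, an eigenvector of $D$ for the eigenvalue $1$ with nonzero last coordinate exists only if every such block sum vanishes. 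This fails, e.g., for the companion matrix of $x^{4k+3}+x^{4k+2}+1$, where $u_0=1$ and all other block coefficients are $0$. Then the $1$-eigenspace has dimension $k+1$ while the algebraic multiplicity is $k+2$, so $D$ is not diagonalizable. Your candidates $u(0,0,1,1)^T$ and $u(0,1,1)^T$ do lie in these images, but they are unreachable.

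\textbf{The fallback $4(k-1)+3+2+2$.} This fails on traces. $D_4$ and $D_3$ have trace $0$, and each $D_2$ with eigenvalues $0,1$ has trace $1$, so $\mathrm{Trace}(D)=0$. But $\mathrm{Trace}(D)=\mathrm{Trace}(A)=1$, since $N$ is nilpotent.

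\textbf{The missing idea.} The paper shifts by $c\Id$, so $A=B+c\Id$ has trace $0$, and uses $k$ blocks $D_4$ followed by a single final $D_3$, all of trace $0$. The final $D_3$ absorbs the companion's last column through $M_{k+1}$. The last entry of each coupling column is forced to be $u_{4s-1}$, but the other three entries are free. The paper chooses $M_s$ so that this column equals $u_{4s-1}p$ with $p=(a^2+a,\,a^2+a+1,\,0,\,1)^T$, which satisfies $D_4p=0$.

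Now $0$ is not an eigenvalue of the final $D_3$, whose eigenvalues are $1,a,a+1$. For each such $\lambda$, let $w_\lambda$ be the $D_3$-eigenvector with last coordinate $1$. Then $x_s=\lambda^{-1}u_{4s-1}p$ solves $(D_4+\lambda\Id)x_s=u_{4s-1}p$, so $(x_1,\dots,x_k,w_\lambda)$ is an eigenvector of $D$, and back-substitution never obstructs. The principle is to make the coupling column an eigenvector of the upper blocks for an eigenvalue \emph{not} shared with the final block, rather than trying to land in the image of a singular $D_4-\lambda\Id$.

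Your trace-$1$ normalization could be kept by the same mechanism. Use blocks $D_4,\dots,D_4,D_3(1)$, whose final block has eigenvalues $0,a,a+1$, with coupling column $u_{4s-1}(0,a^2+a,1,1)^T$, which is a $1$-eigenvector of $D_4$.
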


\begin{proof}
Since $4k+3$ is odd, without worrying we can  replace $B$ by $A=B+c\Id$, which has zero trace. Since the claim follows up to similarity, we may assume without loss of generality that $$A=\left(\begin{array}{cccccc}
                                                                          0 & 0 & \cdots & 0 & 0 &u_0\\
                                                                          1 & 0 & \cdots & 0 & 0 &u_1\\
                                                                          0 & 1 & \cdots & 0 & 0 &u_2\\
                                                                          \vdots & \vdots & \ddots & \vdots & \vdots &\vdots\\
                                                                          0 & 0 & \cdots & 1 & 0 &u_{4k+1}\\
                                                                          0 & 0 & \cdots & 0 & 1 &0
                                                                        \end{array}\right).$$

Let us now build a square-zero matrix $N$ such that $A=N+D$ and $D$ is the direct sum of $k$ diagonalizable blocks of order 4 and one diagonalizable block of order 3. To that direction, let us consider
$$M_s:=\left(
 \begin{array}{ccc}
  0 & 0 & (a^2+a)u_{4s-1}-u_{4s-4} \\
  0 & 0 & (a^2+a+1)u_{4s-1}-u_{4s-3} \\
   0 & 0 & -u_{4s-2} \\
    0 & 0 & 0 \\
   \end{array}
   \right),\hbox{ for $s=1,\dots, k$,}
$$
$$M_{k+1}:=\left(
                    \begin{array}{ccc}
                      0 & 0 & a^2+a-u_{4k} \\
                      0 & 0 & a^2+a+1-u_{4k+1} \\
                      0 & 0 & 0 \\
                    \end{array}
                  \right),$$
and put
$$N:=\left(
      \begin{array}{ccccccc}
        N_4 & 0 & 0 & \cdots & 0 & 0 & M_1 \\
        Q_{4,4} & N_4 & 0 & \cdots & 0 & 0 & M_2 \\
        0 & Q_{4,4} & N_4 & \cdots & 0 & 0 & M_3 \\
        \vdots & \vdots & \ddots & \ddots & \vdots & \vdots & \vdots \\
        0 & 0 & 0 & \ddots & N_4 & 0 & \vdots \\
        0 & 0 & 0 & \cdots & Q_{4,4} & N_4 & M_{k} \\
        0 & 0 & 0 & \cdots & 0 & Q_{3,4} & M_{k+1} \\
      \end{array}
    \right).$$
Since  $N_4 M_s=Q_{4,4} M_s=0$ for $s=1,\dots, k-1$, $N_4M_k=Q_{3,4}M_k=0$, and $M_{k+1} M_{k+1}=0$, we conclude that $N$ is a square-zero matrix. Moreover, if we consider
$$
T_s:=\left(
 \begin{array}{ccc}
  0 & 0 & (a^2+a)u_{4s-1} \\
  0 & 0 & (a^2+a+1)u_{4s-1}  \\
  0 & 0 & 0 \\
  0 & 0 & u_{4s-1} \\
  \end{array}
  \right), \hbox{ for $s=1\dots, k$,}
$$ we are able to write $A=N+D$, where
$$D:=\left(
          \begin{array}{ccccc}
            D_4 & 0 & 0 & 0 & T_1 \\
            0 & D_4 & 0 & 0 & T_2 \\
            0 & 0 & \ddots & 0 & \vdots \\
            0 & 0 & 0 & D_4 & T_{k} \\
            0 & 0 & 0 & 0 & D_3 \\
          \end{array}
        \right)\in M_{4k+3}(\mathbb{F}).$$
Since $D_4$ is diagonalizable, we detect that, for $s=1\dots, k$ the vectors
\begin{align*}
 v_{4s-3}&=(a^2+a)e_{4s-3}+(a^2+a+1)e_{4s-2}+e_{4s},\\
 v_{4s-2}&=(a^2+a)e_{4s-2}+e_{4s-1}+e_{4s},\\
 v_{4s-1}&=(a+1)e_{4s-2}+a e_{4s-1}+e_{4s},\\
 v_{4s}&=a e_{4s-2}+(a+1) e_{4s-1}+e_{4s}
\end{align*}
form a set of $4k$ independent eigenvectors of eigenvalues $0,1,a,a+1$. Moreover, the next vectors are eigenvectors of eigenvalues $1,a,a+1$:
\begin{align*}
  v_{4k+1}&=\sum_{i=1}^k ( (a^2+a)u_{4i-1} e_{4i-3}+(a^2+a+1)u_{4i-1}e_{4i-2}+ u_{4i-1} e_{4i})\\&+(a^2+a)e_{4k+1}+e_{4k+2}+ e_{4k+3},\\
  v_{4k+2}&=\sum_{i=1}^k ((a+1)u_{4i-1} e_{4i-3}+a u_{4i-1}e_{4i-2}+ u_{4i-3} e_{4i-1})\\&+(a+1)e_{4k+1}+a e_{4k+2}+ e_{4k+3},\\
  v_{4k+3}&=\sum_{i=1}^k  (a u_{4i-1} e_{4i-3}+(a+1)u_{4i-1}e_{4i-2}+ u_{4i-3} e_{4i-1})+a e_{4k+1}+(a+1) e_{4k+2}\\&+ e_{4k+3}.
\end{align*}
Moreover, $B=\{v_1,\dots, v_{4k+3}\}$ is a basis of eigenvectors, and hence $D$ is diagonalizable. Observe in addition that, due to the structure of the matrix
$D$, one observes that $v_{4k+1}, v_{4k+2}$ and $v_{4k+3}$ are eigenvectors of eigenvalues $1,a,a+1$ within the $7\times 7$ matrix
$$D':=\left(
  \begin{array}{cc}
    D_4 & T_1 \\
    0 & D_3 \\
  \end{array}
\right).
$$

We thus have shown that $A=N+D$, and so $B=N+D+c\Id$, where $N^2=0$ and $D+c\Id$ is diagonalizable with eigenvalues $c,c+1,c+a,c+a+1$, as required.
\end{proof}

Our last consequence in this section is the following one.

\begin{corollary}\label{4k+3-corollary}
If $\FF=\FF_2$ and $B$ is a non-derogatory matrix of order $4k+3$ with $k\ge 1$ over $\FF$, then $B$ can be expressed as the sum of a square-zero matrix $N$ and a potent matrix $D$ with $D^4=D$.
\end{corollary}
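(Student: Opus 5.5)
The plan follows the pattern of Corollaries \ref{4k+1-corollary} and \ref{4k+2-corollary}. We regard $B$ as a matrix over $\FF_4\supseteq\FF_2$ and fix $a\in\FF_4$ with $a^2+a+1=0$, so that $a\ne 0,1$ and Proposition \ref{4k+3} applies. It gives, with $c=\text{Trace}(B)\in\FF_2$, a square-zero $N$ and a diagonalizable $D$ with $B=N+D$ and eigenvalues $c,c+1,c+a,c+a+1$. Since $c\in\{0,1\}$, this set of eigenvalues is exactly $\{0,1,a,a+1\}=\FF_4$ in either case. Every element $\lambda\in\FF_4$ satisfies $\lambda^4=\lambda$, so diagonalizability of $D$ over $\FF_4$ yields $D^4=D$. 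It then remains only to check that $N$ and $D$ actually lie in $M_{4k+3}(\FF_2)$.

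For that, we go back to the construction in the proof of Proposition \ref{4k+3}. There $A=B+c\Id$ is in companion form with entries $u_i\in\FF_2$, and the only place $a$ enters is through the combinations $a^2+a$ and $a^2+a+1$.

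\begin{itemize}
\item With our choice of $a$ we have $a^2+a=1$ and $a^2+a+1=0$.
\item Hence $N_4$ has entries in $\FF_2$ (its only nonzero entry becomes the $(2,4)$ entry equal to $1$), as in Corollary \ref{4k-corollary}.
\item Each $M_s$ ($s=1,\dots,k$) has last column $(u_{4s-1}+u_{4s-4},\,u_{4s-3},\,u_{4s-2},\,0)^T$, and $M_{k+1}$ has last column $(1+u_{4k},\,u_{4k+1},\,0)^T$.
\item The matrices $Q_{4,4}$ and $Q_{3,4}$ are $0$--$1$ matrices.
\end{itemize}

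Thus $N\in M_{4k+3}(\FF_2)$, and therefore $D=A+N\in M_{4k+3}(\FF_2)$ as well. Finally, the decomposition of $B$ itself is $B=N+(D+c\Id)$ with $c\in\FF_2$. The eigenvalues of $D+c\Id$ are still $\FF_4$ as a set, so $(D+c\Id)^4=D+c\Id$.

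The main point requiring care is this rationality check. The proof of Proposition \ref{4k+3} reduces to companion form by a change of basis, so the conjugating matrix must be taken over $\FF_2$. This is possible because the rational canonical form of $B$ is defined over $\FF_2$. Since the conjugate of a potent or square-zero matrix by an invertible matrix over $\FF_2$ keeps the same property, the decomposition transfers back to $B$.
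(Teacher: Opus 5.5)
Your proof is correct and follows the paper's argument. The paper also regards $B$ over $\FF_4$ with $a^2+a+1=0$, applies Proposition \ref{4k+3}, checks that $N$ (hence $D$) has entries in $\FF_2$, and gets $D^4=D$ because the eigenvalues lie in $\FF_4$. Your only difference is that you write out the entry check, the $\FF_2$-similarity and the $c\Id$ shift explicitly, where the paper defers to the argument of Corollary \ref{4k-corollary}, Case (b).
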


\begin{proof}
The assertion follows from Proposition \ref{4k+3} by considering $B$ as a matrix over $\FF_4$ and taking $a\in \FF_4$ such that $a^2+a+1=0$; notice that $N$ and $D$ are matrices over $\FF_2$ mimicking the same arguments as in the proof of Case (b) of Corollary \ref{4k-corollary}. Moreover, one observes that $D^4=D$, because it is a diagonalizable matrix over $\FF_4$ with eigenvalues $0,1,a,a+1$.
\end{proof}

\section{Major Consequences}

As a culmination of our work in the preceding section, the main achievement here is the following statement.

\begin{theorem}\label{main} Let $\FF$ be a field of characteristic 2 with $\#\FF\ge4$. Then, every square matrix over $\FF$ can be expressed as the sum of a square-zero matrix and a diagonalizable matrix.
\end{theorem}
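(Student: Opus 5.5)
The plan is to reduce to the non-derogative case via the rational canonical form, and then use the fact that a direct sum of decompositions is again a decomposition. Let $B\in M_n(\FF)$. There is an invertible $P$ such that $P^{-1}BP=C_1\oplus\cdots\oplus C_m$, where each $C_i$ is the companion matrix of an invariant factor (equivalently, of an elementary divisor). Each block is therefore non-derogative.

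Suppose that for each $i$ we can write $C_i=N_i+D_i$ with $N_i^2=0$ and $D_i$ diagonalizable. Put $N=\bigoplus N_i$ and $D=\bigoplus D_i$. Then $N^2=\bigoplus N_i^2=0$, and $D$ is diagonalizable, since a direct sum of diagonalizable matrices is diagonalizable: take the union of the eigenbases of the blocks. Hence
$$B=(PNP^{-1})+(PDP^{-1}),$$
where the first summand is square-zero and the second is diagonalizable, because both properties are preserved under similarity. Note that no compatibility between the eigenvalues of different blocks is needed. This is the key point separating the present statement from the $q$-potent refinements: mere diagonalizability of $D$ imposes no constraint across blocks.

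It therefore suffices to treat a single non-derogative matrix of order $r\ge 1$, and I will split into cases according to $r$.

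\textbf{Order $1$.} The matrix is itself diagonal, and we take $N=0$.

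\textbf{Order $2$.} Apply Proposition~\ref{case2}.

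\textbf{Order $3$.} Apply Proposition~\ref{case3}. This requires choosing $a\in\FF$ with $a\ne u_2,u_2+1$, which is possible because $\#\FF\ge4$.

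\textbf{Order $4$.} Apply Proposition~\ref{case4}. If the trace $u_3$ is nonzero, pick $a\ne0,u_3$. If the trace is zero, pick $a\ne0,1$. Both choices exist since $\#\FF\ge 4$.

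\textbf{Order $r\ge5$.} Write $r=4k+j$ with $j\in\{0,1,2,3\}$ and $k\ge1$; when $j=0$ we have $k\ge2$. Then apply, respectively, Proposition~\ref{4k}, Proposition~\ref{4k+1}, Proposition~\ref{4k+2} or Proposition~\ref{4k+3}, with any $a\ne0,1$.

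In case (b) of Proposition~\ref{4k+2}, one also needs $b$ with $b^2=w_{4k}+a^2+a+1$. Such a $b$ exists because the Frobenius map $x\mapsto x^2$ is surjective on the field, as used in the paper. This step is only automatic when $\FF$ is finite (or perfect). I will assume $\FF$ is finite, as in the paper's setting, or note that for a non-perfect $\FF$ this case would need a separate argument.

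I expect no step to be a real obstacle, since everything is carried by the preceding propositions. The only points requiring care are the following.
\begin{itemize}
\item Every order $r\ge1$ must be covered by exactly one earlier result. This holds: $1,2,3,4$ are handled directly, and $4k+j$ is handled for $k\ge1$, with $k\ge2$ when $j=0$.
\item Each admissible choice of $a$ must exist. This is guaranteed by $\#\FF\ge4$, since at most two values are excluded.
\item The existence of square roots needed in Proposition~\ref{case2} and in case (b) of Proposition~\ref{4k+2}, which relies on surjectivity of $x\mapsto x^2$ as discussed above.
\end{itemize}
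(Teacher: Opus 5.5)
Your proposal is correct and is essentially the paper's proof: it reduces via the rational canonical form to companion blocks and then applies Propositions~\ref{case2}, \ref{case3}, \ref{case4}, \ref{4k}, \ref{4k+1}, \ref{4k+2} and \ref{4k+3} according to the order of each block, and the direct-sum and similarity bookkeeping you spell out is exactly what the paper leaves implicit. Your caveat about surjectivity of $x\mapsto x^2$ is in fact unavoidable, not merely a gap in the method: over $\FF_2(t)$ the matrix $A=\bigl(\begin{smallmatrix}0&t\\ 1&0\end{smallmatrix}\bigr)$ has no such decomposition, because $\operatorname{Trace}(D)=\operatorname{Trace}(A)-\operatorname{Trace}(N)=0$ forces a diagonalizable $D$ to equal $\lambda\Id$, and then $N^2=(A+\lambda\Id)^2=(\lambda^2+t)\Id\neq 0$, so the theorem must be read for finite (or perfect) fields, as the paper's title indicates.
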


\begin{proof} Choose $A\in\mathbb{M}_n(\mathbb{F})$; in view of the rational canonical decomposition, $A$ is similar to the direct sum of the companion matrices of its invariant factors. We now consider the following four possibilities:

\medskip

\noindent -- Invariant factors of degree one are diagonalizable, so they can be expressed as the sum of the zero-matrix and themselves.

\medskip

\noindent -- The companion matrices of polynomials of degree 2, 3 and 4 are decomposed into the sum of a square-zero matrix and a diagonalizable matrix thanks to Propositions \ref{case2}, \ref{case3} and \ref{case4}.

\medskip

\noindent -- The companion matrices of polynomials of degree $4k$, where $k\ge 2$, are decomposed into the sum of a square-zero matrix and a diagonalizable matrix thanks to Proposition \ref{4k}.

\medskip

\noindent -- The companion matrices of polynomials of degree $4k+1$, $4k+2$ and $4k+3$, where $k\ge 1$, are decomposed into the sum of a square-zero matrix and a diagonalizable matrix thanks to Propositions \ref{4k+1}, \ref{4k+2} and \ref{4k+3}.

\medskip

This guarantees that all arguments necessary to establish the asserted decomposition are generally satisfied, as expected.
\end{proof}

We are aware that, if $\FF$ is a finite field of characteristic 2 and $\#\FF\ge 4$, then its cardinality is precisely $2^m$ for some $m\ge 2$. Since every diagonalizable matrix $D\in M_n(\FF)$ satisfies $D^{2^m}=D$, as a logical consequence of the above result, we may formulate the following.

\begin{corollary}\label{potent2^m}
Let $\FF$ be a finite field of cardinality $2^m$ with $m\ge 2$, and let $A$ be a matrix over $\FF$. Then, there exists a square-zero matrix $N$ and a $2^m$-potent matrix $D$ such that $A=N+D$.
\end{corollary}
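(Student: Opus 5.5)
The plan is to apply Theorem~\ref{main} directly and then upgrade ``diagonalizable'' to ``$2^m$-potent'' using only the arithmetic of the finite field $\FF=\FF_{2^m}$. Since $m\ge 2$, we have $\#\FF=2^m\ge 4$ and $\mathrm{char}\,\FF=2$. Theorem~\ref{main} therefore applies to any $A\in M_n(\FF)$. It produces a square-zero matrix $N$ and a diagonalizable matrix $D$, both over $\FF$, with $A=N+D$. The only remaining task is to show that this $D$ satisfies $D^{2^m}=D$.

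For this, we use that $D$ is diagonalizable over $\FF$ itself, not merely over an extension. This is what Theorem~\ref{main} asserts: in all the constructions of Section~2, the eigenvalues $0,1,a,a+1,c,ca,b,\dots$ lie in $\FF$, and the exhibited eigenvector bases have entries in $\FF$.

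So there is an invertible $U\in M_n(\FF)$ with $U^{-1}DU=\mathrm{diag}(\lambda_1,\dots,\lambda_n)$ and every $\lambda_i\in\FF$. The multiplicative group $\FF^{*}$ has order $2^m-1$, so $\lambda^{2^m-1}=1$ for every nonzero $\lambda\in\FF$. Hence $\lambda^{2^m}=\lambda$ for all $\lambda\in\FF$, including $\lambda=0$. It follows that
\[
U^{-1}D^{2^m}U=\mathrm{diag}(\lambda_1^{2^m},\dots,\lambda_n^{2^m})=\mathrm{diag}(\lambda_1,\dots,\lambda_n)=U^{-1}DU,
\]
and therefore $D^{2^m}=D$.

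Together with $N^2=0$ and $A=N+D$, this is exactly the claim. There is no real obstacle: all the substantive work is contained in Theorem~\ref{main}. The one point that needs care is confirming that the eigenvalues of $D$ lie in $\FF$, since the Frobenius identity $x^{2^m}=x$ holds only on $\FF$. The propositions of Section~2 supply this explicitly.
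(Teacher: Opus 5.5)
Your proposal is correct and follows the paper's own argument: apply Theorem~\ref{main}, then note that a matrix diagonalizable over $\FF$ has all its eigenvalues in $\FF$, each satisfying $\lambda^{2^m}=\lambda$, so $D^{2^m}=D$. Your remark that the diagonalization must take place over $\FF$ itself is exactly the point the paper uses implicitly in its one-line justification.
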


The previous result is definitely {\it not} true for the field $\FF_2$ as it was shown by \ ~\v{S}ter in \cite{St1} and Shitov in \cite{Sh}. Nevertheless, if we admit that the potent part has index less than or equal to four, then the decomposition of every matrix over $\FF_2$ into a square-zero matrix and a $4$-potent matrix surprisingly holds always.

\begin{corollary}\label{F2-potent}
Let $\FF=\FF_2$, and let $A$ be a matrix over $\FF$. Then, there exists a square-zero matrix $N$ and a $4$-potent matrix $D$ such that $A=N+D$.
\end{corollary}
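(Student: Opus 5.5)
The plan mirrors the proof of Theorem~\ref{main}, working over $\FF_2$ and using the corollaries of Section~2 in place of the propositions. Take $A\in M_n(\FF_2)$. By the rational canonical form there is an invertible $P$ over $\FF_2$ with $P^{-1}AP=C_1\oplus\cdots\oplus C_r$, where each $C_i$ is the companion matrix of an invariant factor of $A$. Each $C_i$ is non-derogatory. The plan is to decompose each block as $C_i=N_i+D_i$ with $N_i,D_i\in M(\FF_2)$, $N_i^2=0$ and $D_i^4=D_i$. Then put
\[
N=P\Bigl(\textstyle\bigoplus_i N_i\Bigr)P^{-1},\qquad D=P\Bigl(\textstyle\bigoplus_i D_i\Bigr)P^{-1}.
\]
Since squaring and fourth powers respect direct sums and conjugation, this gives $N^2=0$, $D^4=D$ and $A=N+D$.

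For the blocks, the cases split by the degree $d$ of the invariant factor.
\begin{itemize}
\item If $d=1$, take $N_i=0$ and $D_i=C_i\in\{0,1\}$; then $D_i^4=D_i$ trivially.
\item If $d=2$, Proposition~\ref{case2} gives $D_i$ with $D_i^2=D_i$, hence $D_i^4=D_i$.
\item If $d=3$, use Corollary~\ref{case3-corollary}; if $d=4$, use Corollary~\ref{case4-corollary}.
\item If $d=4k$ with $k\ge2$, use Corollary~\ref{4k-corollary}.
\item If $d=4k+1$, $4k+2$ or $4k+3$ with $k\ge1$, use Corollaries~\ref{4k+1-corollary}, \ref{4k+2-corollary} and \ref{4k+3-corollary} respectively.
\end{itemize}
Every $d\ge1$ falls into exactly one of these cases.

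The substantive work is already in those corollaries, in particular in checking that $N$ and $D$ have entries in $\FF_2$ and not merely in $\FF_4$. What remains is bookkeeping, with one point to check. In the proof of Corollary~\ref{case4-corollary}, case (a) for the trace-one, degree-$4$ case splits further according to invariant factors of lower degree. A single companion block has only one invariant factor, so only the degree-$4$ subcase is relevant there, and it is handled explicitly for both $x^4+x^3+1$ and $x^4+x^3+x^2+x+1$. The last step is that $D^4=D$ for each $D_i$: each is diagonalizable over $\FF_4$ with eigenvalues in $\FF_4$, and every $\lambda\in\FF_4$ satisfies $\lambda^4=\lambda$. This completes the argument.
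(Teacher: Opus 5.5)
Your argument is the paper's own proof. You reduce to companion blocks via the rational canonical form, take $D_i=C_i$ in degree one, use Proposition~\ref{case2} in degree two, and cite the $\FF_2$-corollaries of Section~2 for degree at least three. As an application of those results it is correct.

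However, your ``one point to check'' is resolved wrongly. A single non-derogatory $4\times 4$ block over $\FF_2$ with trace one can have any of the eight characteristic polynomials $x^4+x^3+u_2x^2+u_1x+u_0$. Only two of them, $x^4+x^3+1$ and $x^4+x^3+x^2+x+1$, are decomposed explicitly. For instance, the companion matrix of $x^4+x^3=x^3(x+1)$ is a single block with a single invariant factor, yet neither explicit decomposition covers it.

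The subcase you dismiss is exactly what handles the other six polynomials. In the proof of Corollary~\ref{case4-corollary} the splitting has to be read via elementary divisors (the primary decomposition), not invariant factors. If $p_B$ is not a power of a single irreducible, then $B$ is similar over $\FF_2$ to a direct sum of companion matrices of degree at most $3$. These are handled trivially or by Proposition~\ref{case2} and Corollary~\ref{case3-corollary}. If $p_B$ is a prime power, trace one forces $p_B$ to be irreducible, since $x^4$, $(x+1)^4=x^4+1$ and $(x^2+x+1)^2=x^4+x^2+1$ all have zero trace.

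Your overall proof still stands, because it only invokes Corollary~\ref{case4-corollary} as stated. But the sanity check should conclude the opposite of what you wrote: the lower-degree subcase is essential, not irrelevant.
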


\begin{proof}
Take $A\in\mathbb{M}_n(\mathbb{F})$; in virtue of the rational canonical decomposition, $A$ is similar to the direct sum of the companion matrices of its invariant factors. We now consider the following three possibilities:

\medskip

\noindent -- Companion matrices of polynomials of degree one are 2-potent, so we are done.

\medskip

\noindent -- The companion matrices of polynomials of degree 2 can be expressed as the sum of a square-zero matrix and a 2-potent matrix with the help of Proposition \ref{case2}.

\medskip

\noindent -- Companion matrices of polynomials of degree greater than or equal to 3 can be written as the sum of a square-zero matrix and a 4-potent matrix with Corollaries \ref{case3-corollary}, \ref{case4-corollary}, \ref{4k-corollary}, \ref{4k+1-corollary}, \ref{4k+2-corollary} and \ref{4k+3-corollary} in mind.
\end{proof}

Note that the order of potency of the matrix $D$ in Corollary \ref{potent2^m} is exactly the cardinality of the finite field. In general, this index of potency {\it cannot} be reduced, because it is achieved for certain non-derogative matrices of order one. Nevertheless, for non-derogative matrices of order $n\ge 2$, we can curiously adjust the potency depending on the characteristic polynomial of the matrix as the next assertion states.

\begin{proposition}\label{adjusting}
Let $n\ge 2$. Let $\FF$ be a finite field of characteristic $2$ and cardinality $\ge 4$, and let $A\in M_n(\FF)$ be a non-derogative matrix over $\FF$ with characteristic polynomial $$p_A(x)=x^n+u_{n-1}x^{n-1}+u_{n-2}x^{n-2}+\dots.$$
\begin{itemize}
 \item[(i)] If $n=2$ and ${\rm Trace}(A)=0$, then there exist $N,D\in M_n(\FF)$ such that $A=N+D$ with $N^2=0$ and $D^3=D$.
 \item[(ii)] If $n=4k+2$ and ${\rm Trace}(A)=0$, then, for any subfield $\KK$ of $\FF$ such that $\KK\ne \FF_2$ and such that $u_{n-2}\in \KK$, it follows that $A=N+D$ with $N^2=0$ and $D^s=D$, where $s=\# \KK$.
 \item[(iii)] In the rest of the cases, for any subfield $\KK$ of $\FF$ such that $\KK\ne \FF_2$ and such that $u_{n-1}\in \KK$, it follows that $A=N+D$ with $N^2=0$ and $D^s=D$, where $s=\# \KK$.
\end{itemize}
\end{proposition}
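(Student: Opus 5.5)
The plan is to reuse the decompositions of Section 2, with the free parameter $a$ (and $b$ where needed) chosen so that every eigenvalue of the diagonalizable part lies in the subfield $\KK$. The basic fact is the following. If $D\in M_n(\FF)$ is diagonalizable over $\FF$ and all its eigenvalues lie in $\KK$ with $\#\KK=s$, then $\lambda^s=\lambda$ for each eigenvalue $\lambda$. Hence $D^s=D$, because $D$ is similar to a diagonal matrix whose entries satisfy $x^s=x$.

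\textbf{Case (i).} Here $n=2$ and $u_1=0$. Proposition \ref{case2} gives $A=N+vI$ with $v^2=u_0$. This only yields $D^s=D$ for a field containing $v$, which is not good enough. Instead I would write $A=N+D$ with $D$ having eigenvalues $0,1$, so that $D^2=D$ and in particular $D^3=D$. Take $A$ in companion form $\begin{pmatrix}0&u_0\\1&0\end{pmatrix}$ and set $D=\begin{pmatrix}0&0\\1&1\end{pmatrix}$. Then $N=A+D=\begin{pmatrix}0&u_0\\0&1\end{pmatrix}$ is not nilpotent, so this first attempt fails. The next try is $D=\begin{pmatrix}1&x\\0&0\end{pmatrix}$, which is idempotent, or more generally any rank-one idempotent. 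We need $N=A+D$ to have trace $0$, which holds automatically since both traces are $1$ in characteristic $2$ (here $\mathrm{Trace}(A)=0$ and $\mathrm{Trace}(D)=1$, so $\mathrm{Trace}(N)=1$; this needs rechecking). Because of this trace obstruction I would instead aim for eigenvalues of $D$ in $\{0,1,\omega,\omega^2\}$ with $\omega^3=1$, giving $D^4=D$. The statement claims $D^3=D$, though, and in characteristic $2$ the polynomial $x^3-x=x(x+1)^2$ has only the roots $0,1$. So $D^3=D$ forces $D$ to be diagonalizable with eigenvalues in $\{0,1\}$ or non-diagonalizable. I would therefore look for a \emph{non-diagonalizable} $D$ with $D^3=D$. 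A natural choice is $D=I+M$ with $M^2=0$: then $D^2=I$, so $D^3=D$. We can then take $M=A+vI$, the nilpotent part from Proposition \ref{case2}, and $N=(1+v)I$. That $N$ is not nilpotent, so it fails. The better choice is $D=I+E_{12}$-type and $N=A+D$. We need $\mathrm{Trace}(N)=0$ and $\det N=0$. Write $D=\begin{pmatrix}1&y\\0&1\end{pmatrix}$, so $N=\begin{pmatrix}1&u_0+y\\1&1\end{pmatrix}$ with trace $0$ and determinant $1+u_0+y$. Choosing $y=u_0+1$ makes $N^2=0$, and $D^2=I$, so $D^3=D$. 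That settles (i).

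\textbf{Cases (ii) and (iii).} Since $\#\KK\ge4$, pick $a\in\KK\setminus\{0,1\}$ and check each proposition in turn.
\begin{itemize}
\item In Proposition \ref{4k}(a) and \ref{4k+2}(a), with trace $c=u_{n-1}\in\KK$, the eigenvalues $0,c,ca,c(a+1)$ lie in $\KK$.
\item In Propositions \ref{4k}(b), \ref{4k+1} and \ref{4k+3}, the eigenvalues are $0,1,a,a+1$ or $c,c+1,c+a,c+a+1$, all in $\KK$.
\item For $n=3$ (Proposition \ref{case3}), take $a\in\KK\setminus\{u_2,u_2+1\}$; this is possible since $\#\KK\ge4$.
\item For $n=4$ (Proposition \ref{case4}), take $a\in\KK\setminus\{0,u_3\}$ in case (a), or $a\in\KK\setminus\{0,1\}$ in case (b).
\item For $n=2$ with nonzero trace, the eigenvalues $0,u_1$ are in $\KK$.
\end{itemize}

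\textbf{Case (ii).} Here $n=4k+2$ and the trace is $0$. Proposition \ref{4k+2}(b) gives eigenvalues $b,b+1,b+a,b+a+1$ with $b^2=u_{n-2}+a^2+a+1\in\KK$. Squaring is bijective on the finite field $\KK$, so $b\in\KK$, and all the eigenvalues lie in $\KK$.

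\textbf{Main obstacle.} The only delicate step is (i), where diagonalizability has to be abandoned. The remaining cases are bookkeeping over $\KK$.
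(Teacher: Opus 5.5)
Your proposal is correct and is essentially the paper's proof. For (i) you use, as the paper does, a non-diagonalizable involution $D$: your pair $D=\begin{pmatrix}1&u_0+1\\0&1\end{pmatrix}$, $N=\begin{pmatrix}1&1\\1&1\end{pmatrix}$ replaces the paper's $D'=\begin{pmatrix}0&1\\1&0\end{pmatrix}$, $N'=\begin{pmatrix}0&1+u_0\\0&0\end{pmatrix}$. For (ii)--(iii) you use the same choice $a\in\KK\setminus\{0,1\}$, together with surjectivity of squaring on $\KK$ to get $b\in\KK$.

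One small addition: read literally, (ii) also includes $n=2$ (i.e.\ $k=0$), where Proposition~\ref{4k+2} is unavailable. There the decomposition $A=N+vI$ from Proposition~\ref{case2} that you discarded is exactly what is needed, since $v^2=u_0=u_{n-2}\in\KK$ forces $v\in\KK$ and hence $(vI)^s=vI$; the paper notes this as well.
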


\begin{proof} We study the different orders $n$ separately.

\medskip

$\bullet$ $n=2$. If ${\rm Trace}(A)=u_1\ne 0$, Proposition \ref{case2} enables us to write that $A=N+D$, where $D$ is diagonalizable with eigenvalues $0, u_1\in \KK$ and so $D^s=D$ for $s=\# \KK$. Otherwise, even if the calculations in Proposition \ref{case2} allow to express $A$ as the sum of a square-zero matrix and a diagonalizable matrix of the form $$D=\left(
                                                                                                                                      \begin{array}{cc}
                                                                                                                                        v & 0 \\
                                                                                                                                        0 & v \\
                                                                                                                                      \end{array}
                                                                                                                                    \right)
,$$ where $v^2=\det(A)\in \KK$, thus leading to $D^s=D$ with $s =\#\KK$, in this case the order of potency of the potent part can be improved due to the following decomposition:
$$A=
\left(
  \begin{array}{cc}
    0 & u \\
    1 & 0 \\
  \end{array}
\right)=\underbrace{\left(
          \begin{array}{cc}
            0 & 1+u \\
            0 & 0 \\
          \end{array}
        \right)}_{N'}+\underbrace{\left(
          \begin{array}{cc}
            0 & 1 \\
            1 & 0 \\
          \end{array}
        \right)}_{D'},
$$
where $(N')^2=0$ and $(D')^3=D'$.

\medskip

$\bullet$ $n=3$. Suppose that ${\rm Trace}(A)=u_2\in \KK$, where $\KK$ is a subfield of $\FF$ with $\KK\ne \FF_2$. Applying Proposition \ref{case3}, we write $A=N+D$, where $N^2=0$ and $D$ is diagonalizable with eigenvalues $$a, a+1, u_2+a$$ for any $a\in\KK$ such that $a\ne u_2, u_2+1$. Hence, $D^s=D$ for $s=\#\KK$.

\medskip

$\bullet$ $n=4$. Suppose that $0\ne {\rm Trace}(A)=u_3\in \KK$, where $\KK$ is a subfield of $\FF$ with $\KK\ne \FF_2$. Employing Proposition \ref{case4}, we write $A=N+D$, where $N^2=0$ and $D$ is diagonalizable with eigenvalues $$0, a, u_3+a$$ for any $a\in\KK$ such that $a\ne 0, u_3$, whence $D^s=D$ for $s=\#\KK$. On the other hand, if ${\rm Trace}(A)=0$, Proposition \ref{case4} works to get that $A=N+D$, where $N^2=0$ and $D$ is diagonalizable with eigenvalues $$0,1,a,a+1,$$ and $a\in \KK$ with $a\ne, 0,1$ for any subfield $\KK$ of $\FF$ with $\KK\ne \FF_2$. Hence, $D^s=D$ for $s=\#\KK$.

\medskip

$\bullet$ $n=4k$, $k\ge 2$. Suppose that $0\ne {\rm Trace}(A)=u_{n-1}\in \KK$, where $\KK$ is a subfield of $\FF$ with $\KK\ne \FF_2$. Utilizing Proposition \ref{4k}, we write $A=N+D$, where $N^2=0$ and $D$ is diagonalizable with eigenvalues $$0,u_{n-1}, u_{n-1}\,a, u_{n-1}\, (a+1)$$ for any $a\in\KK$ such that $a\ne 0,1$, whence $D^s=D$ for $s=\#\KK$. On the other hand, if ${\rm Trace}(A)=0$, Proposition \ref{4k} applies to get that $A=N+D$, where $N^2=0$ and $D$ is diagonalizable with eigenvalues $$0,1,a,a+1$$ such that $\KK$ is a subfield of $\FF$ with $\KK\ne \FF_2$, and $a\in \KK$ with $a\ne 0,1$. Hence, $D^s=D$ for $s=\#\KK$.

\medskip

$\bullet$ $n=4k+1$, $k\ge 1$. Suppose that ${\rm Trace}(A)=u_{n-1}\in \KK$, where $\KK$ is a subfield of $\FF$ with $\KK\ne \FF_2$. By Proposition \ref{4k+1}, we write $A=N+D$, where $N^2=0$ and $D$ is diagonalizable with eigenvalues $$u_{n-1}, u_{n-1}+1, u_{n-1}+a, u_{n-1}+a+1,$$ and $a\in \KK$ with $a\ne 0,1$. Hence, $D^s=D$ for $s=\#\KK$.

\medskip

$\bullet$ $n=4k+2$, $k\ge 1$. Suppose that $0\ne {\rm Trace}(A)=u_{n-1}\in \KK$, where $\KK$ is a subfield of $\FF$ with $\KK\ne \FF_2$. According to Proposition \ref{4k+2}, we write $A=N+D$, where $N^2=0$ and $D$ is diagonalizable with eigenvalues $$0,u_{n-1}, u_{n-1}\,a, u_{n-1}\, (a+1)$$ for any $a\in\KK$, $a\ne 0,1$, whence $D^s=D$ for $s=\#\KK$. On the other hand, if ${\rm Trace}(A)=0$, we consider the coefficient $u_{n-2}$ of degree $n-2$ in $p_{A}(x)$, and let $\KK$ be any subfield of $\FF$ with $\KK\ne \FF_2$ such that $u_{n-2}\in \KK$. Taking into account  Proposition \ref{4k+2}, we write $A=N+D$, where $N^2=0$ and $D$ is diagonalizable with eigenvalues $$b, b+1, b+a, b+a+1$$ such that $a\in \KK$, $a\ne 0,1$, and $$b^2=u_{n-2}+a^2+a+1;$$  it is worthy of noticing that $b\in \KK$, because the map $\phi:\KK\to \KK$ defined by $\phi(x)=x^2$ is surjective. Hence, $D^s=D$ for $s=\#\KK$.

\medskip

$\bullet$ $n=4k+3$, $k\ge 1$. Suppose that ${\rm Trace}(A)=u_{n-1}\in \KK$, where $\KK$ is a subfield of $\FF$ with $\KK\ne \FF_2$. Referring to Proposition \ref{4k+3}, we write $A=N+D$, where $N^2=0$ and $D$ is diagonalizable with eigenvalues $$u_{n-1}, u_{n-1}+1, u_{n-1}+a, u_{n-1}+a+1$$ such that $a\in \KK$ with $a\ne 0,1$. Hence, $D^s=D$ for $s=\#\KK$.

\medskip

This exhausts all possible variants, and thus terminates the whole argumentation, as needed.
\end{proof}

\bigskip
\bigskip
\bigskip

\noindent{\bf Funding:} The first-named author (Peter Danchev) was supported in part by the BIDEB 2221 of T\"UB\'ITAK; the second and third-named authors (Esther Garc\'{\i}a and Miguel G\'omez Lozano) were partially supported by B42025-003: Ayudas para Proyectos Puente de la UMA. All three authors were partially supported by the Junta de Andaluc\'{\i}a PPRO-FQM264-G-2023 (FQM264-G-FEDER).

\vskip3.0pc

\bibliographystyle{plain}

\begin{thebibliography}{100}

\bibitem{Br}
S.~Breaz.
\newblock Matrices over finite fields as sums of periodic and nilpotent elements.
\newblock {\em Linear Algebra \& Appl.}, {\bf 555}:92--97, 2018.

\bibitem{BCDM}
S.~Breaz, G.~C\v{a}lug\v{a}reanu, P.~Danchev and T.~Micu.
\newblock Nil-clean matrix rings.
\newblock {\em Linear Algebra \& Appl.}, {\bf 439}:3115--3119, 2013.

\bibitem{BM}
S.~Breaz and S.~Megiesan.
\newblock Nonderogatory matrices as sums of idempotent and nilpotent matrices.
\newblock {\em Linear Algebra \& Appl.}, {\bf 605}:239--248, 2020.

\bibitem{DGL1}
P.~Danchev, E.~Garc\'ia and M.~G\'omez Lozano.
\newblock Decompositions of matrices into diagonalizable and square-zero matrices.
\newblock {\em Linear and Multilinear Algebra}, {\bf 70}(19):4056--4070, 2022.

\bibitem{DGL2}
P.~Danchev, E.~Garc\'ia and M.~G. Lozano.
\newblock Decompositions of matrices into potent and square-zero matrices.
\newblock {\em Internat. J. Algebra Comput.}, {\bf 32}(2):251--263, 2022.

\bibitem{DGL3}
P.~Danchev, E.~Garc\'ia and M.~G. Lozano.
\newblock Decompositions of matrices into a sum of torsion matrices and matrices of fixed nilpotence.
\newblock {\em Linear Algebra Appl.}, {\bf 676}:44--55, 2023.

\bibitem{DGL4}
P.~Danchev, E.~Garc\'ia and M.~G. Lozano.
\newblock Matrices over finite fields of odd characteristic as sums of diagonalizable and square-zero matrices
\newblock {\em Linear Algebra \& Appl.}, {\bf 730}:35--50, 2026.

\bibitem{Sh}
Y.~Shitov.
\newblock The ring $\mathbb{M}_{8k+4}(\mathbb{Z}_2)$ is nil-clean of index four.
\newblock {\em Indag. Math. (N.S.)}, {\bf 30}:1077--1078, 2019.

\bibitem{St1}
J.~\v{S}ter.
\newblock On expressing matrices over $\mathbb{Z}_2$ as the sum of an idempotent and a nilpotent.
\newblock {\em Linear Algebra \& Appl.}, {\bf 544}:339--349, 2018.

\bibitem{St2}
J.~\v{S}ter.
\newblock Nil-clean index of $\mathbb{M}_n(\mathbb{F}_2)$.
\newblock {\em Linear Algebra \& Appl.}, {\bf 632}:294--307, 2022.

\end{thebibliography}

\end{document}